\newtheorem{remark}{Remark}[section]
\newtheorem{theorem}{Theorem}[section]
\newtheorem{lemma}{Lemma}[section]
\newtheorem{proposition}{Proposition}[section]
\newtheorem{definition}{Definition}[section]
\newtheorem{assumption}{Assumption}[section]
\newcommand{\E}{\mathbb{E}}
\newcommand{\dd}{\,\mathrm{d}}
\newcommand{\commentout}[1]{}
\title{A nonnegativity-preserving finite element method for a class of parabolic SPDEs with multiplicative noise}
\author{Ana Djurdjevac\thanks{University of Oxford, Mathematical Institute, Woodstock Road, Oxford OX2 6GG, UK, {\tt ana.djurdjevac@maths.ox.ac.uk}, Freie Universität Berlin, Arnimallee 6, 14195 Berlin, Germany, {\tt adjurdjevac@zedat.fu-berlin.de}},~ Claude Le Bris\thanks{{\'E}cole des Ponts and INRIA, 6-8 avenue Blaise Pascal, 77455 Marne La Vall{\'e}e, France, {\tt claude.le-bris@enpc.fr}},~ Endre S\"uli\thanks{University of Oxford, Mathematical Institute, Woodstock Road, Oxford OX2 6GG, UK, {\tt suli@maths.ox.ac.uk}}}
\date{}
\begin{document}

\maketitle

\begin{abstract}
 We consider a prototypical parabolic SPDE with finite-dimensional multiplicative noise, which, subject to a nonnegative initial datum, has a unique nonnegative solution. Inspired by well-established techniques in the deterministic case, we introduce a finite element discretization of this SPDE that is convergent and which, subject to a nonnegative initial datum and unconditionally with respect to the spatial discretization parameter, preserves nonnegativity of the numerical solution throughout the course of evolution. We perform a mathematical analysis of this method. In addition, in the associated linear setting, we develop a fully discrete scheme that also preserves nonnegativity, and we present numerical experiments that illustrate the advantages of the proposed method over alternative finite element and finite difference methods that were previously considered in the literature, which do not necessarily guarantee nonnegativity of the numerical solution.
\end{abstract}

\section{Introduction}\label{sec1}

We consider the following prototypical stochastic partial differential equation (henceforth abbreviated as SPDE) with multiplicative noise:
  \begin{equation}\label{eq:intro}
  \dd u - \Delta u \dd t = f(u) \dd W
  \end{equation}
(with the usual notation; see \eqref{eq:continous} below for the precise mathematical meaning) that preserves nonnegativity at the continuous level: the nonnegativity of an initial datum $u_0$ for \eqref{eq:intro} is propagated into the associated solution $u(t)$ for all $t\geq 0$.

The purpose of this article is to construct and analyze a finite element method (FEM) for this SPDE that is accurate and that preserves nonnegativity unconditionally in the discretization parameters (the time step $\Delta t$ and the spatial mesh size $h$).

Our motivation for this study stems from our interest in the Dean--Kawasaki equation and related equations, and more generally a large class of SPDEs that have nonnegative solutions because their solution models the \emph{density} of a system of possibly infinitely many particles (see \cite{djurdjevac2024weak, cornalba2023dean, martinez2024finite, magaletti2022positivity} for further details on this class of problems and for some illustrative and recent mathematical contributions to the topic). The present paper may be seen as a very preliminary step towards the design of suitable numerical methods for such equations. In any event, the equation we study and approximate numerically is sufficiently ubiquitous to be relevant in many practical settings. We therefore hope that the considerations herein will serve as a guideline for the construction of numerical methods for more sophisticated equations. 
The preservation of nonnegativity for parabolic SPDEs with multiplicative noise at the continuous level has also been explored in other contexts in several publications; see, for example, 
\cite{benth1997positivity,chen2019comparison,cresson2013positivity,Moreno}. Numerical methods for SPDEs have been extensively studied in the literature. For general results, we refer to \cite{MR3154916,lord2014introduction,walsh2005finite} and references therein. 
 
As was mentioned above, the specific feature of the SPDE we shall be focussed on here is nonnegativity-preservation. Our concern is therefore at the intersections of two different classes of questions, namely:
\begin{itemize}
\item[(i)] nonnegativity-preservation for finite element discretizations of deterministic partial differential equations; and
\item[(ii)] nonnegativity-preservation for time-stepping schemes for stochastic differential equations.
\end{itemize}
Even in the absence of noise, that is, for deterministic PDEs, preservation of nonnegativity, as stated in item (i) above, is a challenging requirement since it is intimately related to the construction of numerical methods that satisfy a discrete maximum principle;  see, for example, the recent monograph~\cite{MR4900691}. Since we cannot expect any simplifications in this respect in the presence of noise in an otherwise deterministic equation, our numerical method must comply with nonnegativity-preservation for PDEs. We will make these requirements precise later on (in Section~\ref{Section:MLFEM}). They concern the geometry of the mesh and the use of a specific version of the mass matrix, the so-called lumped mass matrix, in the finite element discretization, first introduced in the parabolic setting in \cite{fujii1973some} (see also~\cite{thomee2015positivity,chatzipantelidis2015preservation}).

Briefly, our choice of the spatial discretization is dictated by our wish to preserve nonnegativity. However, this comes at a price. A certain number of mathematical difficulties follow from the technical choices we are bound to make at the finite element level. Put differently, our analysis would have been much simpler and more standard to conduct had we opted for a discretization that is not required to systematically guarantee nonnegativity-preservation. To start with, in the absence of mass-lumping, we could have considered a formulation of the SPDE based on semi-group theory (a formulation that is routinely considered for generic SPDEs; see, for example, \cite{gyongy2009rate,gyongy2005discretization,tambue2019strong,kruse2014optimal,hausenblas2002numerical,yan2003finite}), and then our analysis of the discretization error would also have become much simpler and would have followed through the use of  standard techniques.
 
In addition, ignoring for the moment the dependence of the solution on the spatial variable and concentrating on the random variable, nonnegativity-preservation in the context of time-stepping schemes for SDEs is a well-established topic that has been of particular relevance in, for example, financial mathematics.
We will benefit from the knowledge accumulated in the field concerning this topic to formulate our time discretization; see \cite{scalone2022positivity,szpruch2011numerical,mao2021positivity,lei2023strong} for some specific contributions in this direction. In particular, we have been greatly inspired by the publications of A.~Alfonsi initiated in~\cite{alfonsi2005} and pursued, for example, in~\cite{alfonsi2010,alfonsi-book}.
 
In the context of SPDEs, publications that rigorously address nonnegativity-preservation at the fully-discrete level are rare. We are aware of only a handful of papers on this topic. They all address finite difference type methods (as opposed to finite element methods) and we shall only mention a couple: the recent work \cite{yang2022stochastic} introduces a nonnegativity-preserving finite difference scheme for the linear stochastic heat equation \emph{with finite-dimensional noise} (as will be the case in the present study). In the same spirit, the work~\cite{brehier2023analysis} develops a nonnegativity-preserving Lie–Trotter splitting scheme using a finite difference method in space for a nonlinear heat equation driven by multiplicative space-time white noise, and therefore improves on the former work in terms of generality of the noise but is restricted to the one-dimensional setting.  Positivity preservation for SPDEs has also been investigated in the recent work \cite{cui2023energy}, where energy-regularized logarithmic SPDE models and structure-preserving numerical approximations are studied.

It is important to note that the issue of nonnegativity-preservation cannot be overlooked. In the particular case of the Dean--Kawasaki equation, which we have mentioned as our motivation for the present work, it was shown in \cite{djurdjevac2024hybrid} that a classical spatial finite-volume approximation subject to a nonnegative initial condition may produce numerical solutions with negative values at subsequent times. Ad hoc techniques to overcome this shortcoming have therefore been developed  and then adapted to the  finite element  context in~\cite{martinez2024finite}.  This suffices to indicate the need for the development of a more systematic approach, which is our objective here.
 
It is also important to emphasize outright that our wish to consider finite element methods (as opposed to finite difference schemes), in addition to the fact that the former are of variational nature (and therefore more amenable to classical techniques of numerical analysis) and more flexible for domains with complicated geometry,  is related to \emph{legacy issues} in software engineering. Since many SPDEs of interest are parabolic and since  finite element methods are particularly efficient for the numerical solution of parabolic equations, it is interesting to explore how in-house-developed or off-the-shelf software for the numerical solution of PDEs can be adjusted, in an economical manner, for the numerical solution of parabolic SPDEs.

Our main contribution here is the construction of a semidiscrete finite element method for~\eqref{eq:intro}, based on continuous piecewise affine finite element basis functions in space, involving a lumped mass matrix. We prove that the method is first-order accurate and that it preserves nonnegativity, unconditionally with respect to the spatial mesh size. 
In addition, as a proof of concept for the general case and as an illustration of the possibility to extend our considerations from the semi-discrete level to a fully discrete scheme, we design, in the linear setting, a fully discrete scheme, combining the above  finite element method with a suitable time-stepping technique that also preserves nonnegativity.  

The article is organized as follows. Section~\ref{Sect:continousProblem} introduces in detail the initial-boundary-value problem for the equation \eqref{eq:intro} that we study; we recall the well-posedness of the problem and summarize qualitative properties of the analytical solution. The bulk of our article then consists of Sections~\ref{Section:MLFEM} through~\ref{Section:errorEstimates}, where we consider a spatially semi-discrete  finite element method. The method is introduced in Section~\ref{Section:MLFEM}; we prove there its well-posedness. The specific issue of nonnegativity-preservation is then the focus of Section~\ref{Section:nonnegativity}, while Section~\ref{Section:errorEstimates} contains the numerical analysis of the method. Our final section,  Section~\ref{Section:NumericalExperiments}, presents numerical experiments, focusing on the simple case of a linear parabolic SPDE. We construct a fully discrete scheme there by combining our semidiscrete FE discretization with a suitable time-stepping technique, corresponding to the issues (i) and (ii) highlighted above. We explore the performance of this fully discrete scheme and compare it, specifically in terms of nonnegativity-preservation, with some other available techniques.
 
\section{The continuous problem}\label{Sect:continousProblem}

Our objective is to discretize the following initial-boundary-value problem: 
\begin{alignat}{2}
\begin{aligned}\label{eq:contSPDE}
    \dd u - \Delta u \dd t 
     &= f(u) \dd W &&\qquad \text{in } D \times (0,T], \\
    u(\cdot,0) &= u_0(\cdot) &&\qquad \text{on } D, \\
    u |_{\partial D \times (0,T]} &= 0, \\
\end{aligned}
\end{alignat}
where $T>0$, $D \subset \mathbb{R}^d$, $d \in \{1,2,3\}$, is a bounded open domain and $\dd W$ denotes the noise.

Since our main goal here is the numerical approximation of this problem, for the sake of simplicity, we shall assume throughout the article that: 
\begin{itemize}
\item[(a)] the domain~$D$ is one of the following: a bounded open interval ($d=1$), a bounded open polygonal domain ($d=2$), a bounded open Lipschitz polyhedral domain ($d=3$), or a bounded open convex domain with a $C^2$ boundary ($d \in \{2,3\}$);  and that 
\item[(b)]
the noise $ W$ on the right-hand side of \eqref{eq:contSPDE}$_1$ is a \emph{truncated} $Q$-Wiener process, that is, a finite dimensional white-in-time noise $W^M = (W^M_t)_{t\geq 0}$ of the form 
\begin{equation}\label{noise_expansion}
    W^M(x,t) = \sum_{k=1}^M e_k(x) B_k(t),
\end{equation}
with a family $\{B_{k}\}_{k=1}^M$ of independent scalar Brownian motions defined on the probability space $(\Omega, \mathcal{F}, \{\mathcal{F}_t\}_{t \in [0,T]}, \mathbb{P})$, 
and $\{{e}_{k}\}_{k\in \mathbb{N}}$ form  an orthonormal basis of $L^2(D)$.  
\end{itemize}

This truncated noise is also used in~\cite{gyongy2005discretization,gyongy2009rate}, for example. 
For technical reasons we shall additionally assume throughout the article that $e_k \in C(\overline{D})$ for $k=1,\ldots,M$, and we define
\begin{equation}\label{eq:noise}
    c_e:= \max_{1 \leq k \leq M} \|e_k\|_{C(\overline{D})}.
\end{equation}
Extensions to general $Q$-Wiener processes could be considered, but we will not explore these here. Likewise, we consider on the left-hand side of \eqref{eq:contSPDE} the classical linear heat operator, but the ideas developed here apply to more general parabolic operators. A straightforward extension would be the addition of a zeroth-order term $c\,u$ to the left-hand side of the equation with a real-valued function $c \in L^\infty(0,T; C(\overline{D};\mathbb{R}_{\geq 0}))$. The inclusion of such a term does not affect our results and would only marginally impact our manipulations and proofs. For the sake of simplicity of the exposition we have therefore decided to omit it.  More general linear divergence-form uniformly elliptic operators could also be considered instead of $-\Delta$, the simplest example being $-\hbox{\rm div} [a(x)\,\nabla u]$, where $0 < c_0 \leq a(x) \leq c_1 < \infty$ for all $x \in D$, and $a$ is sufficiently smooth; again, for the sake of simplicity, we shall not discuss this situation here.

Regarding the right-hand side
of \eqref{eq:contSPDE}$_1$, we shall adopt the following assumption:
%
\begin{itemize}
\item[(c)] $f : \mathbb{R} \to \mathbb{R}$ is a uniformly Lipschitz continuous function such that $f(0)=0$; consequently, there exists a positive constant $c_f$ such that
\begin{equation}\label{eq:f}
    |f(s)| \leq c_f |s|\qquad \forall\, s \in \mathbb{R}.
\end{equation}
\end{itemize}

At least intuitively, the property $f(0) = 0$ is necessary in order to ensure the nonnegativity of the solution, to suppress the noise when the value of the solution is close to zero.

Our notion of solution to~\eqref{eq:contSPDE} stated in the next definition is one that is suitable for the construction of a finite element approximation of the problem.

\begin{definition}
A stochastic process with paths in $C([0,T];L^2(D)) \cap L^2(0,T; W^{1,2}_0(D))$ is a solution to the initial-boundary-value problem \eqref{eq:contSPDE}
for an initial condition~$u_0 \in L^2(D; \mathbb{R}_{\geq 0})$ if, almost surely,  
\begin{equation}\label{eq:continous}
    \int_D u(t) \varphi \dd x + \int_0^t \int_{D} \nabla u \cdot \nabla \varphi \dd x \dd s 
    = \int_D u_0 \varphi \dd x+ 
    \int_0^t \int_{D} f(u)  \varphi \dd x \dd W
\end{equation}
for every $\varphi \in W^{1,2}_0(D)$ and all $t \in [0,T]$.
\end{definition}

From \cite[Theorem 5.1.3]{liu2015stochastic} (see also \cite[Theorem 2.2]{gyongy2009rate} or \cite[Theorem 6.7]{da2014stochastic}) we infer the well-posedness of the problem under consideration. In particular, we have the following result.
 \begin{theorem}
     The problem \eqref{eq:contSPDE} has a unique solution. In addition, 
     \begin{equation}\label{energy_estimate_u}
         \mathbb{E} \left( \sup_{t \in [0,T]} \| u(t) \|^2_{L^2(D)}\right) < \infty.
     \end{equation}
 \end{theorem}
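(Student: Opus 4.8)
The plan is to recast \eqref{eq:contSPDE} in the standard variational (Gelfand-triple) framework for SPDEs and then verify the structural hypotheses of the cited existence-and-uniqueness theorem. First I would fix the Gelfand triple $V := W^{1,2}_0(D) \hookrightarrow H := L^2(D) \hookrightarrow V^*$, with $H$ identified with its own dual, and observe that the notion of solution in \eqref{eq:continous} is precisely that of a variational solution in this triple: writing the Laplacian as the bounded linear operator $A \colon V \to V^*$ with $\langle A u, \varphi\rangle_{V^*,V} = -\int_D \nabla u \cdot \nabla \varphi \dd x$, and interpreting the noise through the coefficient $B(u) \colon \R^M \to H$, $B(u)\xi := \sum_{k=1}^M \xi_k\, f(u)\, e_k$, the integrated weak formulation \eqref{eq:continous} coincides with $\langle u(t), \varphi\rangle = \langle u_0, \varphi\rangle + \int_0^t \langle A u(s), \varphi\rangle \dd s + \langle \int_0^t B(u)\dd W, \varphi\rangle$ for all $\varphi \in V$, where the finite-dimensional stochastic integral is $\int_0^t B(u)\dd W = \sum_{k=1}^M \int_0^t f(u)\, e_k \dd B_k$ in agreement with \eqref{noise_expansion}.

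The core of the argument is then to check the four structural conditions (hemicontinuity, weak monotonicity, coercivity, and boundedness) required to apply \cite[Theorem 5.1.3]{liu2015stochastic}. Hemicontinuity and the growth bound $\norm{A v}_{V^*} \le \norm{v}_V$ are immediate, since $A$ is linear and bounded. For the noise coefficient the key estimate is the Hilbert--Schmidt bound: treating $B(u)$ as an operator from $\R^M$ (equipped with the standard inner product, matching the independent standard Brownian motions $B_k$) into $H$, and using $\abs{e_k} \le c_e$ from \eqref{eq:noise} together with \eqref{eq:f}, I would compute $\sum_{k=1}^M \norm{f(u)\, e_k}_{L^2(D)}^2 \le M\, c_e^2\, c_f^2\, \norm{u}_{L^2(D)}^2$, which yields linear growth in $\norm{u}_H$. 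The Lipschitz continuity of $f$ gives, by the same computation, $\sum_{k=1}^M \norm{(f(u) - f(v))\, e_k}_{L^2(D)}^2 \le M\, c_e^2\, L_f^2\, \norm{u-v}_{L^2(D)}^2$, where $L_f$ denotes the Lipschitz constant of $f$. Combining this with the dissipativity identity $2\langle A w, w\rangle = -2\norm{\nabla w}_{L^2(D)}^2$ and the Poincaré inequality (which makes $\norm{\nabla\,\cdot\,}_{L^2(D)}$ equivalent to $\norm{\,\cdot\,}_V$) delivers both coercivity, $2\langle A v, v\rangle + \norm{B(v)}_{\mathrm{HS}}^2 \le -\theta \norm{v}_V^2 + c\, \norm{v}_H^2$ for suitable $\theta, c > 0$, and weak monotonicity, $2\langle A u - A v, u - v\rangle + \norm{B(u) - B(v)}_{\mathrm{HS}}^2 \le c\, \norm{u - v}_H^2$, the dissipative term in the latter being simply discarded as it is nonpositive.

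With all hypotheses verified, \cite[Theorem 5.1.3]{liu2015stochastic} yields a unique variational solution with paths in $C([0,T];H) \cap L^2(0,T;V)$, which is exactly the regularity demanded in the Definition above, together with the a priori bound $\E\big(\sup_{t\in[0,T]}\norm{u(t)}_{H}^2\big) + \E\int_0^T \norm{u(t)}_V^2 \dd t \le C\,(1 + \E\norm{u_0}_H^2)$; since $u_0 \in L^2(D)$, the right-hand side is finite and \eqref{energy_estimate_u} follows.

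I expect the only genuinely nonroutine step, were one to establish the a priori estimate from scratch rather than quote it, to be the supremum-in-time bound in \eqref{energy_estimate_u}. Applying Itô's formula to $\norm{u(t)}_H^2$, taking the supremum over $t$ \emph{before} the expectation, and then controlling the resulting stochastic-integral term requires the Burkholder--Davis--Gundy inequality followed by a Young-type splitting to absorb a fraction of $\E\big(\sup_t \norm{u(t)}_H^2\big)$ into the left-hand side; the remaining terms are then closed by Gronwall's lemma using the coercivity and growth bounds established above. Everything else reduces to the verification of the structural conditions, which, thanks to $f(0)=0$, the Lipschitz bound \eqref{eq:f}, the uniform bound $c_e$ on the finitely many noise modes, and the dissipativity of the Dirichlet Laplacian, is essentially mechanical.
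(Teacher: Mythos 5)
Your proposal is correct and takes essentially the same route as the paper: the paper's entire proof consists of invoking \cite[Theorem 5.1.3]{liu2015stochastic} (the variational, monotone-operator framework on the Gelfand triple $W^{1,2}_0(D) \hookrightarrow L^2(D) \hookrightarrow (W^{1,2}_0(D))^*$), which is exactly the theorem you apply. Your verification of hemicontinuity, weak monotonicity, coercivity, and the Hilbert--Schmidt linear-growth and Lipschitz bounds for the finite-dimensional noise coefficient is precisely the (routine) checking that the paper leaves implicit, and it is accurate.
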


Having established well-posedness of the problem, we now turn to the question of nonnegativity preservation.
Under the additional assumption  that $f(0)=0$ (cf. \eqref{eq:f}), nonnegativity-preservation for solutions to the problem subject to a nonnegative initial datum $u_0 \in L^2(D;\mathbb{R}_{\geq 0})$ is, in fact, a consequence of the following more general \textit{comparison principle}, the proof of which is based on ingredients that are analogous to those of the comparison principle stated in \cite[Theorem 2.1]{donati1993white} and \cite[Theorem 2.5]{Kotelenez-1992}.
\begin{theorem}\label{thm_cont_comp}
    Let $\overline{u}$ and $\underline{u}$ be two solutions to \eqref{eq:contSPDE} with  corresponding initial data $\overline{u}_0, \underline{u}_0\in L^2(D)$, such that $\overline{u}_0(x) \geq \underline{u}_0(x)$ for Lebesgue almost all $x \in D$; then,
\begin{equation}
        \mathbb{P} \left[\overline{u}(x,t) \geq \underline{u}(x,t) \text{ for a.e. } x \in D \text{ and every }t \in [0,T] \right] =1. 
    \end{equation}
\end{theorem}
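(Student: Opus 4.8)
The plan is to reduce the comparison to showing that the positive part of the difference $v := \underline{u} - \overline{u}$ vanishes almost surely, and to establish this through a Grönwall argument applied to $t \mapsto \mathbb{E}\,\|v^+(t)\|_{L^2(D)}^2$, where $v^+ := \max(v,0)$, after invoking an Itô formula for a smooth convex surrogate of $s \mapsto (s^+)^2$. First I would subtract the two weak formulations \eqref{eq:continous} (both driven by the same noise $W$) to see that $v$ is a variational solution of
\begin{equation*}
\dd v - \Delta v \dd t = g \dd W, \qquad v(0,\cdot) = \underline{u}_0 - \overline{u}_0 \le 0 \ \text{ a.e. in } D,
\end{equation*}
where $g := f(\underline{u}) - f(\overline{u})$ obeys the pointwise bound $|g| \le c_f\,|v|$ by \eqref{eq:f}, and where $v \in C([0,T];L^2(D)) \cap L^2(0,T;W^{1,2}_0(D))$ almost surely. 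The goal then becomes $v^+ \equiv 0$ almost surely.

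Next I would fix a family $\{\psi_\varepsilon\}_{\varepsilon>0} \subset C^2(\R)$ of nonnegative convex functions approximating $s \mapsto (s^+)^2$, chosen so that $\psi_\varepsilon \equiv 0$ on $(-\infty,0]$, that $\psi_\varepsilon''$ is supported in $[0,\infty)$ with $0 \le \psi_\varepsilon'' \le C$ uniformly in $\varepsilon$, and that $\psi_\varepsilon(s) \to (s^+)^2$, $\psi_\varepsilon'(s) \to 2s^+$ and $\psi_\varepsilon''(s) \to 2\,\mathbf{1}_{\{s>0\}}$ pointwise. Applying the Itô formula to the $C^2$ functional $w \mapsto \int_D \psi_\varepsilon(w)\dd x$ on $L^2(D)$ (the variational Itô formula that underlies the arguments of \cite{donati1993white,Kotelenez-1992}) gives, after localizing by stopping times at which $\|v\|_{L^2(D)}$ is bounded so that the stochastic integral drops out on taking expectations,
\begin{equation*}
\mathbb{E}\!\int_D \psi_\varepsilon(v(t))\dd x = \mathbb{E}\!\int_D \psi_\varepsilon(v(0))\dd x + \mathbb{E}\!\int_0^t \langle \Delta v, \psi_\varepsilon'(v)\rangle \dd s + \tfrac{1}{2}\,\mathbb{E}\!\int_0^t\!\!\int_D \psi_\varepsilon''(v)\sum_{k=1}^M (g\,e_k)^2 \dd x \dd s.
\end{equation*}
Since $\psi_\varepsilon'$ is Lipschitz with $\psi_\varepsilon'(0)=0$, the chain rule yields $\psi_\varepsilon'(v) \in W^{1,2}_0(D)$ and $\langle \Delta v, \psi_\varepsilon'(v)\rangle = -\int_D \psi_\varepsilon''(v)\,|\nabla v|^2 \dd x \le 0$ by convexity, so that term may be discarded; and $v(0) \le 0$ forces $\int_D \psi_\varepsilon(v(0))\dd x = 0$.

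It remains to let $\varepsilon \to 0$. Using $\|e_k\|_{C(\overline{D})} \le c_e$ together with $|g| \le c_f|v|$, and crucially that $\psi_\varepsilon''(v)$ vanishes off $\{v>0\}$, so that $|g| \le c_f\,v^+$ wherever $\psi_\varepsilon''(v)\neq 0$, the correction term is dominated by $\tfrac{1}{2} M c_e^2 c_f^2 \int_D \psi_\varepsilon''(v)\,(v^+)^2 \dd x$, whose integrand is controlled by $C\,(v^+)^2 \le C v^2$, integrable in $\Omega\times(0,t)\times D$ by \eqref{energy_estimate_u}. Dominated convergence then sends both sides to their limits, yielding
\begin{equation*}
\mathbb{E}\,\|v^+(t)\|_{L^2(D)}^2 \le M c_e^2 c_f^2 \int_0^t \mathbb{E}\,\|v^+(s)\|_{L^2(D)}^2 \dd s,
\end{equation*}
whence Grönwall's lemma gives $\mathbb{E}\,\|v^+(t)\|_{L^2(D)}^2 = 0$ for each fixed $t$. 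Path-continuity of $v$ in $L^2(D)$, applied along a countable dense set of times, upgrades this to $v^+ \equiv 0$ for every $t \in [0,T]$ almost surely, which is the assertion.

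The delicate points, which I expect to be the main obstacle, are the rigorous justification of the Itô formula for the convex surrogate functional in this low-regularity variational setting, and the control of the Itô-correction term in the limit $\varepsilon\to 0$. The decisive structural feature is that the support of $\psi_\varepsilon''$ confines the noise correction to $\{v>0\}$, where the Lipschitz bound reads $|g| \le c_f\,v^+$; this is exactly what produces $\|v^+\|^2$ rather than $\|v\|^2$ on the right-hand side, so that the Grönwall inequality closes on a single quantity. (The ensuing nonnegativity statement for $u_0 \ge 0$ then follows by specializing to $\underline{u}\equiv 0$, which is a solution precisely because $f(0)=0$.)
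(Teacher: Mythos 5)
Your proposal is correct, but note that the paper contains no proof of this theorem at all: it is stated with a pointer to \cite[Theorem 2.1]{donati1993white} and \cite[Theorem 2.5]{Kotelenez-1992}, whose ``ingredients'' the authors say the proof follows. What you have written is precisely an argument of that type, so in substance you have reconstructed the intended proof; it is also the exact continuous-space counterpart of the paper's own proof of the discrete nonnegativity result, Theorem~\ref{thm:nonnegativity}. There, the regularization $\varphi_p$ plays the role of your $\psi_\varepsilon$; your chain-rule observation $\langle \Delta v, \psi_\varepsilon'(v)\rangle = -\int_D \psi_\varepsilon''(v)\,|\nabla v|^2 \dd x \leq 0$, available because the genuine chain rule holds in $H^1_0(D)$, replaces the mass-lumping monotonicity lemma \cite[Lemma 4.1]{barrett2012finite}, which is needed at the discrete level precisely because the chain rule fails for $\pi_h$. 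The one structural difference is the order of limits: the paper closes Gr\"onwall at fixed $p$ via the pointwise inequality $\varphi_p''(s)s^2 \leq 6\varphi_p(s)$ and only then sends $p \to \infty$, whereas you pass $\varepsilon \to 0$ first (legitimate, thanks to your uniform bound $0 \leq \psi_\varepsilon'' \leq C$ and domination by $v^2$, integrable by \eqref{energy_estimate_u}) and run Gr\"onwall directly on $\mathbb{E}\,\|v^+(t)\|^2_{L^2(D)}$; both routes work. Two cosmetic remarks: the bound $|g| \leq c_f|v|$ uses the uniform Lipschitz continuity of $f$ from assumption (c), not \eqref{eq:f} itself; and the localization you flag is indeed necessary, since \eqref{energy_estimate_u} provides only second moments.
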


\section{The mass-lumped  finite element method}\label{Section:MLFEM}

With a view to preserving nonnegativity, we shall adopt here a specific version of the  finite element method. For deterministic PDEs, and more particularly for parabolic PDEs, a reference work that addresses issues in this direction is~\cite{fujii1973some}.  It is essentially established in that work that for the  finite element method with piecewise affine basis functions: (a) there are geometric conditions on the mesh to ensure that a discrete maximum principle holds (unconditionally with respect to the mesh size) and (b) that using mass-lumping in the variational formulation (as opposed to a conventional variational formulation) allows for this discrete maximum principle to hold for a larger range of time steps. In particular, in the case of a fully implicit time integrator (which will be our choice in our numerical illustrations in Section~\ref{Section:NumericalExperiments}), the property holds unconditionally with respect to the time step. Strictly speaking, the discrete maximum principle is a stronger property than nonnegativity-preservation at the discrete level, but, at least for a linear parabolic equation whose steady state is the null function, the two notions are equivalent. For all practical purposes, nonnegativity preservation is  therefore an appropriate requirement in the deterministic setting.

In order to formulate the mass-lumped finite element approximation of the initial-boundary-value problem \eqref{eq:contSPDE}, suppose that $\mathcal{T}_h$ is a shape-regular simplicial triangulation of $D$ with maximal edge-length (mesh size) $h>0$. A typical (closed)  simplex contained in $\mathcal{T}_h$ will be denoted by $K$. It is supposed that no simplex $K$ has more than $d$ vertices on $\partial D$. Let $D_h$ denote the interior of the closed set $\bigcup_{K \in \mathcal{T}_h} K$. 
When $D$ is a polygonal domain ($d=2$) or a polyhedral domain ($d=3$), then $D_h=D$. If on the other hand $D$ is a convex domain with a $C^2$ boundary, then $D_h \subset D$; we shall assume in that case that all vertices of the triangulation contained in $\partial D_h$ belong to $\partial D$. Let $\{x_i\,:\, i=1,\ldots, N\}$ be the set of all vertices of the triangulation $\mathcal{T}_h$ contained in the open set $D_h \subset D$; we consider
the finite element space 
\begin{equation}
    V_h:= \mbox{span}\{\phi_1,  \ldots, \phi_N\},
\end{equation}
spanned by the standard continuous piecewise affine finite element basis functions $\phi_j \in  W^{1,\infty}_0(D)$ on the triangulation $\mathcal{T}_h$, with $\mathrm{supp}(\phi_j) \subset \overline{D_h}\subset \overline{D}$, $j=1, \ldots, N$; then $\phi_j(x_i) = \delta_{i,j}$ for all $i,j=1,\ldots,N$.

Let $C_0(\overline{D})$ denote the set of all uniformly continuous functions defined on $\overline{D}$ that vanish on the boundary $\partial D$ of $D$. Given a function $w \in C_0(\overline{D})$, its  global continuous piecewise affine interpolant $\pi_h w$ is defined by
\begin{equation}\label{def:interpolant}
(\pi_h w)(x):= \sum_{K \in \mathcal{T}_h} \chi_K(x) (\pi_h w|_K)(x) ,\quad x \in \overline{D_h},
\end{equation}
where $\chi_K$ is the characteristic function of the (closed)  simplex $K \in \mathcal{T}_h$, $\pi_h w|_K$ is the local continuous piecewise affine interpolant of $w \in C(\overline{K})$ on the closed { simplex $K$: 
\[ (\pi_h w|_{K})(x) := \sum_{j=0}^d w(x^K_j) \phi_j^K(x),\quad x \in K,\]
where $x^K_j$, $j=0,\ldots, d$, are the vertices of the simplex $K$ and $\phi_j^K$, for $j\in \{0,\ldots, d\}$, is the affine basis function defined on $K$ associated with $x_j^K$, i.e., $\phi_i^K(x_j^K) = \delta_{i,j}$, $i, j=0,\ldots,d$. As $\pi_h w(x)=0$ for all $x \in \partial{D_h}$, it follows that $\pi_h w(x)=0$ for all $x \in \overline D \setminus D_h$.

Note that we can write
\begin{equation}
    (\pi_h w)(x) = \sum_{j=1}^N w(x_j) \phi_j(x), \quad x \in D.
\end{equation}

Hereafter, and in line with~\cite{fujii1973some} (and other publications on the subject, such as~\cite{thomee2015positivity} and \cite{MR4900691}) we assume that the triangulation is not only shape-regular, but also \emph{weakly acute}; that is, 
\[\int_  K \nabla \phi^K_i \cdot \nabla \phi^K_j \dd x \leq 0\quad \left\{
\begin{array}{ll}
\mbox{for all $i,j \in 
\{1,\ldots,N\}$ such that $i \neq j$}\\ 
\mbox{and for each simplex $K$ in the triangulation $\mathcal{T}_h$.}
\end{array}
\right.
\]

The  finite element method under consideration is then defined using the following \emph{mass-lumped variational formulation}: find $u_h(\cdot,t) \in V_h$ for $t \in (0,T]$, such that
\begin{equation}\label{ML-FEM_formulation}
    \begin{aligned}
(\pi_h[u_h(t) v_h], 1) &+ \int_0^t (\nabla u_h(s) , \nabla v_h) \dd s \\ 
& = (\pi_h[u_h^0 v_h], 1) + \int_0^t (\pi_h [f(u_h(s)) v_h \dd W^M], 1) \dd s\quad \forall\,   v_h \in V_h,
\end{aligned}
\end{equation}
where, as above, $(\cdot,\cdot)$ is the inner product of $L^2(D)$, and the initial condition $u_h^0 \in V_h$ is defined by
\[u_h^0:= \pi_h u_0,\] 
with $u_0 \in C_0(\overline{D};\mathbb{R}_{\geq 0})$ given.

\subsection{An equivalent system of SDEs}

Because  the stochastic process $u_h(\omega,\cdot,t) \in V_h$ for all $\omega \in \Omega$ and $t \in [0,T]$, it can be represented as $u_h(\omega,x,t) = \sum_{j=1}^N U_j(\omega,t) \phi_j(x)$ for $\omega \in \Omega$ and $(x,t) \in \overline{D} \times [0,T]$. For the sake of notational simplicity, here and hereafter we shall suppress the argument $\omega \in \Omega$ and write $U_j(t)$ instead of $U_j(\omega,t)$. We define $U_j(0)=U_j^0:=u_0(x_j)$, $j=1,\ldots,N$. We note in particular that, by definition, $u_h(x,t)=0$ for all $(x,t) \in (\overline{D}\setminus D_h) \times [0,T]$. The purpose of this subsection is to restate the finite element method, defined in \eqref{ML-FEM_formulation} above, as a system of SDEs for the stochastic vector function $U$ with values in $\ C([0,T];\mathbb{R}^N)$, where $(U(t))_j:=U_j(t)$, $j=1,\ldots,N$.
Because $\phi_j(x_i) = \phi_i(x_j) = \delta_{i,j}$, $i,j = 1,\ldots,N$, we have that 
\[ (\pi_h[u_h(t) \phi_i], 1) 
= \int_D \sum_{j=1}^N  (u_h(x_j,t)\phi_i(x_j) )\phi_j(x) \dd x = u_h(x_i,t) \int_D \phi_i(x) \dd x\]
for $i=1,\ldots,N$, and therefore 
\begin{align} \label{eq:L1}
(\pi_h[u_h(t) \phi_i], 1) = U_i(t) \int_D \phi_i(x) \dd x, \quad i=1,\ldots,N.
\end{align}

Next, we consider, for $i=1,\ldots,N$,
\[ ( (\nabla u_h(s) \cdot \nabla \phi_i), 1)  
= \sum_{j=1}^N U_j(s) \int_D \nabla \phi_j(x) \cdot \nabla \phi_i(x) \dd x.\] 
We define the matrix $A \in \mathbb{R}^{N \times N}$ by 
\begin{equation}
\label{eq:matrixA}
A_{i,j}:= \frac{\int_D \nabla \phi_j(x) \cdot \nabla \phi_i(x) \dd x}{\int_D \phi_i(x) \dd x },\quad i,j=1,\ldots,N.
\end{equation}
Clearly $A=A^{\mathrm{T}}$ if and only if $\int_D \phi_i \dd x = \int_D \phi_j \dd x$ for all $i,j \in \{1,\ldots,N\}$. Thus, in general, the matrix $A$ is \textit{not} symmetric. In any case, because the numerator of $A_{i,j}$ is equal to $A_{i,j} \int_D \phi_i(x) \dd x$, we have
\[ ((\nabla u_h(s) \cdot \nabla \phi_i), 1) = \left(\int_D \phi_i(x) \dd x \right) \sum_{j=1}^N A_{i,j} U_j(s).\]
Recalling that, by definition, $U_j(s) \in \mathbb{R}$ is the $j$-th component of the vector $U(s) \in \mathbb{R}^N$, it follows that
\begin{align} \label{eq:L2}
((\nabla u_h(s) \cdot \nabla \phi_i), 1) = \left(\int_D \phi_i(x) \dd x \right) (AU(s))_i, \quad i=1,\ldots, N.
\end{align}

Finally, we have that 
\begin{align}\label{eq:R2}
(\pi_h [f(u_h(s)) \phi_i \dd W^M(s)], 1)
&= \sum_{k=1}^M \left(\int_D \pi_h [f(u_h(x,s)) \phi_i(x) e_k(x)] \dd x \right)\! \dd B_k(s)\nonumber\\
&=\sum_{k=1}^M \sum_{j=1}^N f(u_h(x_j,s))\phi_i(x_j) e_k(x_j)\left(\int_D \phi_j(x)\dd x \right)\! \dd B_k(s)\nonumber\\
&=\sum_{k=1}^M f(u_h(x_i,s)) e_k(x_i) \left(\int_D \phi_i(x)\dd x \right)\! \dd B_k(s)\nonumber\\
&=f(U_i(s)) \left(\int_D \phi_i(x)\dd x \right) \sum_{k=1}^M e_k(x_i) \dd B_k(s)\nonumber\\
&= \left(\int_D \phi_i(x)\dd x \right) \sum_{k=1}^M (f(U(s)) E_k)_i \dd B_k(s),\quad i=1,\ldots,N,
\end{align}
where, for each $k=1,\ldots,M$, $f(U(s))E_k \in \mathbb{R}^N$ is a vector whose $i$-th component is defined to be $f(U_i(s))e_k(x_i) = f(u_h(x_i,s))e_k(x_i)=\pi_h[f(u_h(x,s)e_k(x)]|_{x=x_i}$,  $i=1,\ldots,N$, $s \in (0,T]$.
Taking $v_h = \phi_i$, $i=1,\ldots,N$, in \eqref{ML-FEM_formulation} it follows from \eqref{eq:L1}, \eqref{eq:L2} and \eqref{eq:R2} that the  finite element method \eqref{ML-FEM_formulation} can be rewritten in the following equivalent form:
\begin{equation}\label{SDE_formulation}
    U(t) + \int_0^t AU(s) \dd s 
    = U^0 + \int_0^t\sum_{k=1}^M  (f(U(s)) E_k) \dd B_k(s),\quad t \in [0,T].
\end{equation}
We shall use in the sequel both \eqref{ML-FEM_formulation} and \eqref{SDE_formulation}, depending on which of these equivalent forms is more convenient to work with in the specific context.

\subsection{Well-posedness and energy estimate}

In order to apply It$\hat{\rm o}$'s formula, we introduce the seminorm $ \| \cdot \|_h$ on $C_0(\overline{D})$ defined by
\begin{equation}
\label{eq:discrete-norm}
    \| w \|_h := \| \pi_h(w^2) \|^{1/2}_{L^1(D)},\quad w \in C_0(\overline{D}),
\end{equation}
which is a norm on the finite element space $V_h$. The norm $\|\cdot\|_h$ is induced by the semi-inner product $(\cdot,\cdot)_h$ defined by
\begin{equation}
\label{eq:discrete-scalar}
    (w, v)_h := (\pi_h(w v), 1), \quad w \in C(\overline{D}),\, v \in C_0(\overline{D}),
\end{equation}
which is an inner product on $V_h$.

The function space in which we shall work is $(V_h, (\cdot, \cdot)_h)$. 
To proceed, we recall the definition of the discrete Dirichlet Laplacian $-\Delta_h : V_h \to V_h$:
\begin{equation}\label{def:Delta_h-h}
    (-\Delta_h w_h, v_h)_h := (\nabla w_h, \nabla v_h), \quad  w_h, v_h \in V_h,
\end{equation}
and note that 
\begin{equation}\label{L2-projection-h}
    (\pi_h v, w_h)_h = (\pi_h((\pi_h v )w_h), 1) = (\pi_h(v w_h),1)  = (v, w_h)_h, \quad v \in C_0(\overline{D}),\,  w_h \in V_h,
\end{equation}
where the first and the third equalities come from the definition of the inner product $(\cdot,\cdot)_h$, while the second equality is a consequence of the fact  that the $\pi_h((\pi_h v)w_h) = \pi_h(uw_h)$ because $\pi_h((\pi_hv)w_h)$ and $\pi_h(vw_h)$ are continuous piecewise affine functions on $\mathcal{T}_h$ and $((\pi_hv)w_h)(x) = (\pi_hv)(x) w_h(x) = v(x)w_h(x)$ at each vertex $x$ of the triangulation $\mathcal{T}_h$, -- and there is a unique such continuous piecewise affine function defined on $\mathcal{T}_h$.
In particular, \eqref{L2-projection-h} implies that $\pi_h: C_0(\overline{D}) \to V_h$ is an orthogonal projector onto $V_h$ with respect to the inner product $(\cdot,\cdot)_h$, and $\|\pi_h v\|_h \leq \|v\|_h$ for all $v \in C_0(\overline{D})$.
Furthermore, note that $\pi_h$ is idempotent, i.e., $\pi_h (\pi_h v) = \pi_h v$ for all $v \in C_0(\overline{D})$. 

Using the inner product $(\cdot,\cdot)_h$, we have that
\begin{align*}
    (\pi_h[f(u_h) e_k v_h],1) = (f(u_h)e_k, v_h)_h = (\pi_h(f(u_h)e_k), v_h)_h \quad \forall\, v_h \in V_h.
\end{align*}
We can therefore rewrite \eqref{ML-FEM_formulation} in the following equivalent form:
\begin{align}\begin{aligned}\label{FEM-formulation-h-product}
    &(u_h(t), v_h)_h + \int_0^t (-\Delta_h u_h(s), v_h)_h \dd s \\ &\quad = 
    (u_h^0, v_h)_h + \int_0^t \sum_{k=1}^M (\pi_h(f(u_h(s))e_k), v_h)_h \dd B_k(s),
\end{aligned}\end{align}
or, in a more concise form, as the following equation in $V_h$:
\begin{equation}\label{FEM-formulation-strong}
    \dd u_h - \Delta_h u_h \dd t = \sum_{k=1}^M \pi_h(f(u_h)e_k) \dd B_k(t), \quad \mbox{with $u_h(0)=u_h^0:=\pi_h u_0$.}
\end{equation}
We then have the following well-posedness result. 

\begin{proposition}\label{w-p-uh} Suppose that $u_0 \in C_0(\overline{D};\mathbb{R}_{\geq 0})$; then, 
  there exists a unique solution $u_h \in C([0,T]; L^2(\Omega, V_h))$ to~\eqref{FEM-formulation-strong}. Moreover, the following energy estimate holds:
\begin{equation}\label{eq:est11}
     \E [\| u_h(t) \|_h^2] + 2 \int_0^t \E [\| \nabla u_h (s)\|^2 \dd s] \leq \exp(t c_e^2 c_f^2 M ) \, \E  [\| u_h^0 \|_h^2] \quad \forall\, t \in (0,T].
\end{equation}
\end{proposition}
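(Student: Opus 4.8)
The plan is to treat the semidiscrete scheme through its equivalent SDE formulation~\eqref{SDE_formulation} and invoke standard SDE theory for existence and uniqueness, then derive the energy estimate by applying It\^o's formula to $\|u_h(t)\|_h^2$ using the intrinsic formulation~\eqref{FEM-formulation-strong}. For \emph{well-posedness}, I would first observe that~\eqref{SDE_formulation} is a finite-dimensional It\^o SDE in $\mathbb{R}^N$ with drift $b(U) = -AU$ and diffusion coefficients $U \mapsto f(U)E_k$, $k=1,\dots,M$. The drift is linear, hence globally Lipschitz, and each diffusion coefficient is globally Lipschitz in $U$ because $f$ is uniformly Lipschitz by assumption~(c) and the entries $e_k(x_i)$ are fixed real constants (finite since $e_k \in C(\overline D)$); moreover $f(0)=0$ gives linear growth. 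The classical existence-and-uniqueness theorem for SDEs with globally Lipschitz coefficients then yields a unique strong solution $U \in C([0,T];\mathbb{R}^N)$ with $\E\sup_{t\le T}|U(t)|^2 < \infty$, which transfers back to a unique $u_h \in C([0,T];L^2(\Omega,V_h))$ via the representation $u_h = \sum_j U_j \phi_j$ and the equivalence of~\eqref{SDE_formulation} with~\eqref{FEM-formulation-strong}.

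For the \emph{energy estimate}, I would work in $(V_h,(\cdot,\cdot)_h)$ and apply It\^o's formula to the functional $u_h \mapsto \|u_h\|_h^2 = (u_h,u_h)_h$, using the strong form~\eqref{FEM-formulation-strong}. The drift term contributes $-2\int_0^t(-\Delta_h u_h, u_h)_h\,\dd s = -2\int_0^t\|\nabla u_h\|^2\,\dd s$ by the definition~\eqref{def:Delta_h-h} of the discrete Laplacian, which is exactly the dissipative term appearing on the left-hand side of~\eqref{eq:est11}. The It\^o correction (quadratic variation) term is $\sum_{k=1}^M \int_0^t \|\pi_h(f(u_h)e_k)\|_h^2\,\dd s$. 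Here the key estimate is to bound each $\|\pi_h(f(u_h)e_k)\|_h^2$: using that $\pi_h$ is an $(\cdot,\cdot)_h$-orthogonal projection (so $\|\pi_h(f(u_h)e_k)\|_h \le \|f(u_h)e_k\|_h$), the pointwise bound $|e_k|\le c_e$ from~\eqref{eq:noise}, and the Lipschitz/linear-growth bound $|f(s)|\le c_f|s|$ from~\eqref{eq:f} applied at the vertices (where $\|\cdot\|_h$ is evaluated through nodal values), I would obtain $\|\pi_h(f(u_h)e_k)\|_h^2 \le c_f^2 c_e^2 \|u_h\|_h^2$. Summing over the $M$ noise modes yields a correction term bounded by $c_f^2 c_e^2 M \int_0^t \E\|u_h\|_h^2\,\dd s$.

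Taking expectations kills the martingale (stochastic integral) term, leaving the differential inequality
\begin{equation*}
\E\|u_h(t)\|_h^2 + 2\int_0^t \E\|\nabla u_h(s)\|^2\,\dd s \le \E\|u_h(0)\|_h^2 + c_f^2 c_e^2 M \int_0^t \E\|u_h(s)\|_h^2\,\dd s,
\end{equation*}
and an application of Gr\"onwall's inequality gives the exponential factor $\exp(t c_f^2 c_e^2 M)$ in~\eqref{eq:est11}; taking the supremum over $t\in[0,T]$ then delivers the stated bound. The main obstacle I anticipate is the careful justification of the noise-term bound $\|\pi_h(f(u_h)e_k)\|_h \le c_f c_e \|u_h\|_h$, since $\|\cdot\|_h$ is defined through the interpolant $\pi_h(w^2)$ rather than through $(\pi_h w)^2$; the argument hinges on the fact that $\|w\|_h^2 = \sum_i (\int_D \phi_i\,\dd x)\, w(x_i)^2$ depends only on nodal values, so that the pointwise multiplicative bounds at the vertices translate cleanly into the seminorm bound. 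A secondary technical point is ensuring the applicability of It\^o's formula in the non-symmetric setting induced by~\eqref{eq:matrixA}; this is why the intrinsic formulation~\eqref{FEM-formulation-strong} together with the symmetric inner product $(\cdot,\cdot)_h$ is used rather than working directly with the non-symmetric matrix $A$.
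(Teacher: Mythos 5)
Your proposal is correct and follows essentially the same route as the paper's own proof: well-posedness via the equivalent finite-dimensional SDE system~\eqref{SDE_formulation} with globally Lipschitz coefficients, then It\^o's formula for $\|u_h\|_h^2$ in $(V_h,(\cdot,\cdot)_h)$, the bound $\|\pi_h(f(u_h)e_k)\|_h^2 \leq c_f^2 c_e^2 \|u_h\|_h^2$ exploiting that $\|\cdot\|_h$ depends only on nodal values, taking expectations, and Gr\"onwall. The only detail the paper spells out that you gloss over is the verification that the stochastic integral is a true martingale (via the moment bounds from the Lipschitz SDE theory), but your well-posedness step already records the required integrability, so this is a cosmetic rather than substantive difference.
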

\begin{proof}
     The existence of a unique solution $u_h \in C([0,T]; L^2(\Omega, V_h))$ to \eqref{SDE_formulation} (which is equivalent to~\eqref{FEM-formulation-strong}) follows from standard theory for systems of SDEs with Lipschitz continuous coefficients.     
     The energy estimate~\eqref{eq:est11} is obtained using  It$\hat{\rm{o}}$'s formula from~\cite[Lemma 4.1]{MR651582} 
     over the (finite-dimensional) Hilbert space $(V_h, \| \cdot \|_h)$.    
     Hence, we have that
    \begin{align*}
        \| u_h(t) \|^2_h = \| u_h^0 \|^2_h &+ 2 \int_0^t (\Delta_h u_h(s), u_h(s))_h \dd s + \sum_{k=1}^M \int_0^t \| \pi_h(f(u_h(s))e_k) \|^2_h \dd s  \\
        &+ 2 \sum_{k=1}^M \int_0^t ( \pi_h(f(u_h(s))e_k), u_h(s))_h \dd B_k(s).
    \end{align*}
    As $\| \pi_h(f(u_h(s))e_k) \|_h \leq \| f(u_h(s))e_k \|_h$, it follows from our assumption \eqref{eq:f} on $f$, and the assumed boundedness \eqref{eq:noise} of the functions $e_k$, $k=1,\ldots,M$, and the  nonnegativity of the finite element basis functions, that
    \begin{equation*}
       \| \pi_h(f(u_h(s))e_k) \|_h^2 \leq c_e^2 c_f^2 \|u_h(s)\|_h^2.
    \end{equation*}
    It then follows from the definition of $-\Delta_h$ that
    \begin{align*}
        \| u_h(t) \|^2_h + 2\int_0^t \|\nabla u_h(s) \|^2  \dd s \leq \| u_h^0 \|^2_h 
        + c_e^2 c_f^2 \sum_{k=1}^M \int_0^t \| u_h(s) \|_h^2 \dd s + M_t, 
    \end{align*}
    where $M_t:=  2 \sum_{k=1}^M \int_0^t ( \pi_h(f(u_h(s))e_k), u_h(s))_h \dd B_k(s)$ 
    is a martingale. To confirm this, we note that 
    \[\int_0^t |( \pi_h(f(u_h(s))e_k), u_h(s))_h|^2 \dd s \leq c_e^2 c_f^2  \int_0^t \|u_h(s)\|_h^4\dd s < \infty\]
    for any fixed $h>0$, where we have made use of the fact that $u_h \in C([0,T]; V_h)$, almost surely. Furthermore, since $u_h$ solves a system of SDEs with Lipschitz continuous coefficients, we know that all moments are bounded, which implies that $M_t$ is a martingale.
    Hence, by taking the expectation of the last displayed inequality, we obtain the bound
    \begin{align}\label{energy-with-gradient}
         \E [\| u_h(t) \|_h^2 ]  + 2 \int_0^t \E[\|\nabla u_h(s) \|^2] \dd s\leq 
         \E[\| u_h^0 \|_h^2] + c_e^2 c_f^2 M \int_0^t \E[\| u_h(s) \|_h^2] \dd s.
    \end{align}
    We then obtain~\eqref{eq:est11} from \eqref{energy-with-gradient}, using the Gr\" onwall's lemma.
\end{proof}

\section{The nonnegativity-preservation property}\label{Section:nonnegativity}

\subsection{It$\hat{\bf o}$'s formula for mass-lumping}

The formulation \eqref{SDE_formulation} is well suited for applying It$\hat{\rm{o}}$'s formula \cite[Theorem 1.2]{pardouxt1980stochastic}, \cite[Theorem 2.1]{donati1993white}, which in turn will be crucial in our proof of preservation of nonnegativity. In fact, we have the following result.

\begin{lemma}
\label{lemma:new-ito}
    Suppose that    
     $\psi \in C^{2}(\mathbb{R})$, with $\psi(0)=\psi'(0)=0$,
     and that $u_h$ solves \eqref{FEM-formulation-h-product}, with $u_0 \in C_0(\overline{D};\mathbb{R}_{\geq 0})$; then, the following equality holds:
\begin{align}\label{eq:new-ito}
&\int_D \pi_h[\psi(u_h(x,t))] \dd x + \int_0^t \left[\int_D \nabla u_h(x,s) \cdot \nabla \left(\pi_h[\psi'(u_h(x,s))]\right) \dd x \right]\! \dd s\nonumber \\
&
= \int_D \pi_h[\psi(u_h^0(x))] \dd x\nonumber \\
&\quad +\sum_{k=1}^M \int_0^t \int_D \pi_h [\psi'(u_h(x,s))f(u_h(x,s))e_k(x)] \dd B_k(s)\nonumber \\
&\quad + 
\frac{1}{2}\sum_{k=1}^M \int_0^t \int_D \pi_h[\psi''(u_h(x,s))(f(u_h(x,s))e_k(x))^2]\dd x \dd s,\quad t \in [0,T].
\end{align}
\end{lemma}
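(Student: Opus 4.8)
The plan is to reduce this identity to the ordinary finite-dimensional It\^o formula applied to a single scalar function of the coefficient vector $U(t)$, exploiting the equivalent SDE system \eqref{SDE_formulation}. Throughout, write $m_i := \int_D \phi_i \dd x$ for the lumped-mass entries. The only interpolation fact I need is that, for any $w \in C_0(\overline{D})$, the representation $\pi_h w = \sum_{i=1}^N w(x_i)\phi_i$ recorded just after \eqref{def:interpolant} gives $\int_D \pi_h w \dd x = \sum_{i=1}^N m_i\, w(x_i)$, together with $u_h(x_i,s)=U_i(s)$ at every interior vertex and $u_h=0$ on $\partial D$. The hypotheses enter precisely here: $\psi(0)=0$ together with $f(0)=0$ guarantees that $\psi(u_h)$, $\psi'(u_h)f(u_h)e_k$ and $\psi''(u_h)(f(u_h)e_k)^2$ all lie in $C_0(\overline{D})$, so that every interpolant in \eqref{eq:new-ito} is meaningful, while $\psi'(0)=0$ ensures $\pi_h[\psi'(u_h)]\in V_h$ and hence that the gradient term is well-posed.

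The central step is to introduce the function $\Phi:\mathbb{R}^N\to\mathbb{R}$, $\Phi(U):=\sum_{i=1}^N m_i\,\psi(U_i)$, which is $C^2$ because $\psi\in C^2(\mathbb{R})$, and to apply the standard multidimensional It\^o formula to $\Phi(U(t))$ along the It\^o process $U$ solving \eqref{SDE_formulation}; this is legitimate since, by Proposition \ref{w-p-uh}, $U$ has continuous paths, bounded moments and Lipschitz-continuous coefficients. By the interpolation fact, $\Phi(U(t)) = \int_D \pi_h[\psi(u_h(t))] \dd x$ and $\Phi(U^0)=\int_D\pi_h[\psi(\pi_h u_0)] \dd x$, producing the first term on the left and the first on the right of \eqref{eq:new-ito}. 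The decisive simplification afforded by mass-lumping is that $\Phi$ is \emph{separable}, so its Hessian $\partial_i\partial_l\Phi = m_i\psi''(U_i)\,\delta_{i,l}$ is diagonal; consequently, in the It\^o correction only the diagonal entries of the covariation $\dd\langle U_i,U_l\rangle = \sum_{k=1}^M f(U_i)e_k(x_i)\,f(U_l)e_k(x_l)\dd t$ survive, and together with the interpolation fact this collapses exactly to $\tfrac12\sum_{k=1}^M\int_D \pi_h[\psi''(u_h)(f(u_h)e_k)^2]\dd x$. The martingale part yields $\sum_{k=1}^M\sum_i m_i\,\psi'(U_i)f(U_i)e_k(x_i)\dd B^k = \sum_{k=1}^M \int_D \pi_h[\psi'(u_h)f(u_h)e_k] \dd x \dd B^k$, once more by the interpolation fact.

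The remaining contribution, and the step I expect to demand the most care, is the drift $\sum_i m_i\,\psi'(U_i)(AU)_i$, because the matrix $A$ of \eqref{eq:matrixA} is \emph{not} symmetric. The clean route is to go back to the symmetric stiffness integrals rather than to work with $A$ directly: writing $\pi_h[\psi'(u_h)]=\sum_i\psi'(U_i)\phi_i$ and $u_h=\sum_j U_j\phi_j$ gives $\int_D\nabla u_h\cdot\nabla(\pi_h[\psi'(u_h)])\dd x = \sum_{i,j}\psi'(U_i)U_j\int_D\nabla\phi_i\cdot\nabla\phi_j\dd x$, and the defining relation \eqref{eq:matrixA}, namely $\int_D\nabla\phi_i\cdot\nabla\phi_j\dd x = m_i A_{i,j}$, collapses this to $\sum_i m_i\psi'(U_i)(AU)_i$, which matches the It\^o drift after it is moved to the left-hand side. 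Collecting these four identifications and integrating in time over $[0,t]$ then yields \eqref{eq:new-ito}.
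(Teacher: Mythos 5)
Your proposal is correct and follows essentially the same route as the paper: the paper applies the finite-dimensional It\^o formula to the functional $\Psi(V):=\int_D \pi_h[\psi(v_h)]\dd x$, which is exactly your separable function $\Phi(V)=\sum_i m_i\psi(V_i)$ written in interpolant form, and identifies the drift, martingale, and correction terms via the same relation $\int_D\nabla\phi_j\cdot\nabla\phi_i\dd x = A_{i,j}\int_D\phi_i\dd x$ that you use. The only cosmetic difference is that you make the diagonality of the Hessian explicit, whereas the paper reaches the same collapse of the It\^o correction by computing the second Gateaux derivative $\langle\Psi''(U)V,W\rangle=\int_D\pi_h[\psi''(u_h)v_hw_h]\dd x$ and evaluating it on the noise directions.
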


\begin{proof}
    Let us define the functional  $V \in \mathbb{R}^N \mapsto \Psi(V) \in \mathbb{R}_{\geq 0}$ by
\[ \Psi(V):= \int_D \pi_h [\psi(v_h(x))]\dd x, \]
where $v_h \in V_h$ is defined by $v_h(x):= \sum_{j=1}^N V_j \phi_j(x)$. In other words, we associate to any~$V \in \mathbb{R}^N$ a function $v_h \in V_h$ through $v_h(x):= \sum_{j=1}^N V_j \phi_j(x)$, and then use this $v_h$ to define $\Psi(V)$ through $\int_D \pi_h[\psi(v_h)] \dd x$.
By It$\hat{\rm{o}}$'s formula we have, for $t \in [0,T]$,
\begin{align}
\begin{aligned}
\label{eq:ito-111}
 \Psi(U(t)) &+ \int_0^t \langle \Psi'(U(s)), AU(s) \rangle \dd s 
 = \Psi(U^0) + \sum_{k=1}^M \int_0^t \langle \Psi'(U(s)), f(U(s))E_k\rangle \dd B_k(s)\\ &+ \frac{1}{2}\sum_{k=1}^M \int_0^t \langle \Psi''(U(s))f(U(s))E_k, f(U(s))E_k \rangle \dd s, 
\end{aligned}
\end{align}
where $\Psi'$ and $\Psi''$ are, respectively, the first and second Gateaux derivative of $\Psi$.
We shall now rewrite~\eqref{eq:ito-111} in terms of the original function $u_h$. To this end, we successively consider all the terms of~\eqref{eq:ito-111}.

The first term on the left-hand side of~\eqref{eq:ito-111} reads 
\begin{equation}
\label{eq:first-left}
\Psi(U(t)) = \int_D \pi_h[\psi(u_h(x,t)] \dd x, \quad t \in [0,T].
\end{equation}

To deal with the second term, we need to compute $\Psi'(U)$. Thanks to the definition of the Gateaux derivative, 
\begin{align*}
\langle \Psi'(U),V\rangle &= \frac{\dd}{\dd \alpha} \Psi(U+\alpha V)|_{\alpha=0}
=\int_D \pi_h[\psi'(u_h(x))v_h(x)] \dd x,
\end{align*}
where $u_h(x):=\sum_{j=1}^N U_j \phi_j(x)$ and $v_h(x):=\sum_{j=1}^N V_j \phi_j(x)$ for $U, V \in \mathbb{R}^N$.
Consequently, 
\begin{align*}
\langle \Psi'(U(s)), AU(s) \rangle =  \int_D \pi_h[\psi'(u_h(x,s))v_h(x,s)] \dd x,
\end{align*}
where $v_h(x,s) = \sum_{i=1}^N (AU(s))_i\, \phi_i(x) = \sum_{i,j=1}^N A_{i,j}U_j(s)\, \phi_i(x)$. Next, 
\begin{align*} 
\pi_h[\psi'(u_h(x,s))v_h(x,s)]&= \sum_{\ell=1}^N \psi'(u_h(x_\ell,s))v_h(x_\ell,s)\phi_\ell(x) \\
&= \sum_{\ell=1}^N \psi'(U_\ell(s)) \left(\sum_{i,j=1}^N A_{i,j}U_j(s) \phi_i(x_\ell) \right)\phi_\ell(x)\\
&= \sum_{\ell=1}^N\psi'(U_\ell(s))\left(\sum_{j=1}^N A_{\ell,j}U_j(s) \right)\phi_\ell(x) \\
&= \sum_{i=1}^N\psi'(U_i(s))\left(\sum_{j=1}^N A_{i,j}U_j(s) \right)\phi_i(x).
\end{align*}
Therefore, recalling the definition of $A_{i,j}$, we have the following equality:
\begin{align*}
\int_D \pi_h[\psi'(u_h(x,s))v_h(x,s)]
 \dd x &= \sum_{i=1}^N \psi'(U_i(s))\left(\sum_{j=1}^N A_{i,j}U_j(s) \right) \int_D \phi_i(x) \dd x\\
 &= \sum_{i=1}^N\psi'(U_i(s))\sum_{j=1}^N \left(\int_D \nabla \phi_j(x) \cdot \nabla \phi_i(x) \dd x \right) U_j(s)\\
 &=\int_D \nabla\left(\sum_{j=1}^N U_j(s) \phi_j(x)\right) \cdot \nabla \left(\sum_{i=1}^N \psi'(U_i(s))\phi_i(x) \right)\! \dd x\\
 &= \int_D \nabla u_h(x,s) \cdot \nabla \left(\sum_{i=1}^N \psi'(u_h(x_i,s))\phi_i(x) \right) \!\dd x\\
 &= \int_D \nabla u_h(x,s) \cdot \nabla \left(\pi_h[\psi'(u_h(x,s))]\right) \!\dd x.
\end{align*}
We have thus shown that
\begin{equation}
\label{eq:second-left}
 \int_0^t \langle \Psi'(U(s)), AU(s)\rangle\dd s = \int_0^t \left[\int_D \nabla u_h(x,s) \cdot \nabla \left(\pi_h[\psi'(u_h(x,s))]\right) \!\dd x \right]\!\! \dd s,\quad t \in [0,T].
 \end{equation}

Next, we turn to the right-hand side of~\eqref{eq:ito-111}. To start with, we notice that
\begin{equation}
\label{eq:first-right}
 \Psi(U^0) = \int_D \pi_h[\psi(u_h^0(x))]\dd x.
 \end{equation}

Using a similar calculation as above, we also have that
\begin{equation}
\label{eq:second-right}
\sum_{k=1}^M \int_0^t \langle \Psi'(U(s)), f(U(s))E_k\rangle \dd B_k(s) = \sum_{k=1}^M \int_0^t \int_D \pi_h [\psi'(u_h(x,s))f(u_h(x,s)e_k(x)] \dd B_k(s)
 \end{equation}
for all $t \in [0,T]$.

It remains to transform the rightmost term of~\eqref{eq:ito-111}
\[ \frac{1}{2}\sum_{k=1}^M \int_0^t \langle \Psi''(U(s))f(U(s))E_k, f(U(s))E_k \rangle \dd s\]
in It$\hat{\mathrm{o}}$'s formula, written in terms of $U$, into an equal expression written in terms of $u_h$. To do this, we need to calculate $\Psi''$:
\[ \langle \Psi''(U)V, W \rangle = \frac{\dd}{\dd \alpha} \langle \Psi'(U + \alpha W) , V \rangle|_{\alpha = 0} = \int_D \pi_h[\psi''(u_h)v_h w_h]\dd x,\] 
where $u_h(x,t) = \sum_{j=1}^N U_j(t) \phi_j(x)$, $v_h(x) = \sum_{j=1}^N V_j \phi_j(x)$ and $w_h(x) = \sum_{j=1}^N W_j \phi_j(x)$, for $V, W \in \mathbb{R}^N$. Therefore,
and because 
\begin{align*} 
\sum_{j=1}^N (f(U(s))E_k)_j \phi_j(x) &= \sum_{j=1}^N f(U_j(s))e_k(x_j) \phi_j(x) \\
&= \sum_{j=1}^N f(u_h(x_j,s))e_k(x_j) \phi_j(x)= \pi_h[f(u_h(s))e_k](x),
\end{align*}
we have $\pi_h[f(u_h(\cdot,s))e_k](x_i) = (f(U(s))E_k)_i$, and hence
\begin{align}\label{eq:rightmost-right}
&\frac{1}{2}\sum_{k=1}^M \int_0^t \langle \Psi''(U(s))f(U{(s)})E_k, f(U)E_k \rangle \dd s \nonumber\\\nonumber
&= 
\frac{1}{2}\sum_{k=1}^M \int_0^t \int_D\! \pi_h[\psi''(u_h(x,s))(\pi_h[f(u_h(x,s))e_k(x)])^2] \dd x \dd s \\
&= 
\frac{1}{2}\sum_{k=1}^M \int_0^t \int_D \pi_h[\psi''(u_h(x,s))(f(u_h(x,s))e_k(x))^2]\dd x \dd s.
\end{align}
Collecting \eqref{eq:first-left} through~\eqref{eq:rightmost-right} and substituting into \eqref{eq:ito-111}, we obtain \eqref{eq:new-ito}; that concludes the proof of Lemma~\ref{lemma:new-ito}.
\end{proof}

\subsection{Nonnegativity-preservation}

We now proceed to prove (almost sure) nonnegativity-preservation for the numerical solution.
\begin{theorem}
\label{thm:nonnegativity}
   Suppose that $u_0 \in C_0(\overline{D};\mathbb{R}_{\geq 0})$ and let $u_h$ denote the corresponding solution to \eqref{FEM-formulation-strong}; then,
\begin{equation}
        \mathbb{P} \left[u_h(x,t) \geq 0\, \,\mbox{for all $(x,t) \in \overline{D} \times [0,T]$} \right] = 1.
    \end{equation}
\end{theorem}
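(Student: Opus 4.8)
The plan is to prove nonnegativity by choosing a suitable penalization function $\psi$ in the It\^o formula of Lemma~\ref{lemma:new-ito} that measures the negative part of the solution, and then to show that its expectation stays zero. Concretely, I would fix a small parameter $\varepsilon > 0$ and construct a smooth, nonnegative, convex function $\psi = \psi_\varepsilon \in C^2(\mathbb{R})$ that vanishes together with its first derivative on $[0,\infty)$ (so that $\psi(0) = \psi'(0) = 0$ as required by the lemma), that approximates the negative-part function $s \mapsto (s_-)^2 = (\max\{-s,0\})^2$ from below as $\varepsilon \to 0$, and whose derivatives are supported on the negative axis: $\psi'(s) \le 0$ and $\mathrm{supp}(\psi'), \mathrm{supp}(\psi'') \subset (-\infty, 0)$. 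The key structural point is that $f(0) = 0$ forces, via \eqref{eq:f}, $|f(s)| \le c_f |s|$, so on the set where $\psi''(s) \neq 0$ (i.e. $s < 0$) one controls $(f(s))^2 \le c_f^2 s^2$, which after lumping will make the It\^o correction term comparable to $\int_D \pi_h[\psi(u_h)]\dd x$ up to constants.

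The main steps, in order, are as follows. First, I would apply Lemma~\ref{lemma:new-ito} with $\psi = \psi_\varepsilon$ and take expectations; since $u_h^0 = \pi_h u_0 \ge 0$ at every vertex (because $u_0 \in C_0(\overline{D};\mathbb{R}_{\ge 0})$ and $\pi_h$ interpolates nodal values), the term $\int_D \pi_h[\psi_\varepsilon(\pi_h u_0)]\dd x = 0$. The stochastic integral term vanishes in expectation (it is a martingale, using the moment bounds from Proposition~\ref{w-p-uh}). Second, I would argue the Dirichlet/diffusion term is nonnegative: the quantity $\int_D \nabla u_h \cdot \nabla(\pi_h[\psi_\varepsilon'(u_h)])\dd x$ equals, in nodal coordinates, $\sum_{i,j} \psi_\varepsilon'(U_i)\big(\int_D \nabla\phi_i\cdot\nabla\phi_j\big)U_j$; here the weak acuteness of the mesh (off-diagonal stiffness entries $\le 0$) together with the monotonicity of $\psi_\varepsilon'$ yields, by a discrete chain-rule / monotonicity argument of the type used for discrete maximum principles, that this term is $\ge 0$. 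This is precisely where the geometric mesh hypothesis is consumed. Third, I would bound the It\^o correction term by $\tfrac{1}{2} c_f^2 c_e^2 M \int_0^t \mathbb{E}\big[\int_D \pi_h[\psi_\varepsilon''(u_h)\,u_h^2]\dd x\big]\dd s$, and then relate $\psi_\varepsilon''(s)\,s^2$ back to $\psi_\varepsilon(s)$ so that, after passing to the limit, the right-hand side becomes a constant multiple of $\int_0^t \mathbb{E}\big[\int_D \pi_h[(u_h)_-^2]\dd x\big]\dd s$.

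Assembling these, I obtain a Gr\"onwall-type inequality for $t \mapsto \mathbb{E}\big[\int_D \pi_h[\psi_\varepsilon(u_h(t))]\dd x\big]$ with zero initial value and zero forcing, so that after letting $\varepsilon \to 0$ (by monotone or dominated convergence, using the energy estimate \eqref{eq:est11} to justify the limit) one concludes $\mathbb{E}\big[\int_D \pi_h[(u_h(t))_-^2]\dd x\big] = 0$ for every $t$. Since $\int_D \pi_h[(u_h(t))_-^2]\dd x = \sum_i (U_i(t))_-^2 \int_D \phi_i\dd x$ with $\int_D \phi_i\dd x > 0$, this forces $U_i(t)_- = 0$ almost surely for each fixed $t$; continuity of $t \mapsto U(t)$ then upgrades this to a statement holding simultaneously for all $t \in [0,T]$ on a single probability-one event, and the representation $u_h(x,t) = \sum_i U_i(t)\phi_i(x)$ with $\phi_i \ge 0$ gives nonnegativity at every $(x,t) \in \overline{D}\times[0,T]$.

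I expect the main obstacle to be the second step, namely verifying rigorously that the diffusion term $\int_D \nabla u_h \cdot \nabla(\pi_h[\psi_\varepsilon'(u_h)])\dd x$ is nonnegative. Unlike the classical linear case, here $\psi_\varepsilon'$ is nonlinear, so one cannot simply invoke symmetry of the stiffness matrix; instead one must exploit that for each pair $i \neq j$ the off-diagonal stiffness entry is $\le 0$ (weak acuteness) and pair it with the sign of $(\psi_\varepsilon'(U_i) - \psi_\varepsilon'(U_j))(U_i - U_j) \ge 0$, which holds because $\psi_\varepsilon'$ is nondecreasing. Making this bookkeeping precise — rewriting the double sum so that the diagonal contributions cancel against the row sums $\sum_j \int_D \nabla\phi_i\cdot\nabla\phi_j\dd x$ (which vanish for interior basis functions since $\sum_j \phi_j \equiv 1$, but need care near the boundary) — is the technical heart of the argument, and it is exactly the place where the choice of continuous piecewise affine elements on a weakly acute mesh is indispensable.
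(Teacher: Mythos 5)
Your proposal follows essentially the same route as the paper's proof: the same regularized negative-part penalty (your $\psi_\varepsilon$ is the paper's $\varphi_p$ with $\varepsilon\sim 1/p$, satisfying the crucial bound $\psi''(s)s^2\leq C\,\psi(s)$ that closes the Gr\"onwall argument at fixed regularization parameter), applied in Lemma~\ref{lemma:new-ito}, with the initial and martingale terms vanishing in expectation, followed by the limit in the regularization parameter and the nodal/piecewise-affine conclusion. The only difference is cosmetic: where you sketch a direct proof of the nonnegativity of the term $\int_D \nabla u_h\cdot\nabla(\pi_h[\psi_\varepsilon'(u_h)])\dd x$ via weak acuteness and monotonicity of $\psi_\varepsilon'$, the paper simply invokes \cite[Lemma 4.1]{barrett2012finite}, which encapsulates exactly that argument.
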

\begin{proof}
     We first define a convex function $\varphi_p \in C^2(\mathbb{R};\mathbb{R}_{\geq 0})$ whose first derivative is a regularization of the function $s \in \mathbb{R} \mapsto [s]_{-}:= \min\{s,0\} \in \mathbb{R}_{\leq 0}$. More precisely, for $p > 0$, we consider
\[ \varphi_p(s) := \begin{cases} 
      \frac{s^2}{2} + \frac{s}{2p} + \frac{1}{6p^2}, & s\leq - \frac{1}{p}, \\
      -\frac{ps^3}{6}, & -\frac{1}{p}\leq s\leq 0, \\
      0, & s \geq 0. 
   \end{cases}
\]
The first and second derivatives of $\varphi_p$ are then given, respectively, by
\[
\varphi_p'(s) = \begin{cases} 
      s + \frac{1}{2p},  & s\leq - \frac{1}{p}, \\
      -\frac{ps^2}{2}, & -\frac{1}{p}\leq s\leq 0, \\
      0, & s \geq 0; 
   \end{cases}
   \qquad 
\varphi_p''(s) = \begin{cases} 
      1,  & s\leq - \frac{1}{p}, \\
      -ps, & -\frac{1}{p}\leq s\leq 0, \\
      0, & s \geq 0. 
   \end{cases}
\]

We now choose $\psi=\varphi_p$ as test function in~\eqref{eq:new-ito} and take the expectation of both sides of the resulting equality; this gives
\begin{align}
\begin{aligned}\label{newnewIto}
&\mathbb{E}\int_D \pi_h[\varphi_p(u_h(x,t))] \dd x +\mathbb{E} \int_0^t \left[\int_D \nabla u_h(x,s) \cdot \nabla \left(\pi_h[\varphi_p'(u_h(x,s))]\right) \dd x \right]\! \dd s\\
&\,= \mathbb{E}\int_D \pi_h[\varphi_p(\pi_h u_0(x))] \dd x + \frac{1}{2} \,\mathbb{E}
\sum_{k=1}^M \int_0^t \int_D \pi_h[\varphi_p''(u_h(x,s))(f(u_h(x,s))e_k(x))^2]\dd x \dd s.
\end{aligned}
\end{align}
Since, by assumption,  $u_0(x)\geq 0$ for all $x \in \overline{D}$, also $\pi_h u_0(x) \geq 0$ for all $x \in \overline{D}$; thus,
\[ \mathbb{E}\int_D \pi_h[\varphi_p(\pi_h u_0(x))] \dd x = 0.\]
For the second term on the left-hand side of \eqref{newnewIto}, since $\varphi_p'$ is a monotonically increasing globally Lipschitz continuous function, we can invoke 
\cite[Lemma 4.1]{barrett2012finite}
to infer that
\[\mathbb{E} \int_0^t \left[\int_D \nabla u_h(x,s) \cdot \nabla \left(\pi_h[\varphi_p'(u_h(x,s))]\right) \!\dd x \right]\! \dd s\,\geq \,0.\]
This then implies that 
\begin{align*}\begin{aligned}
&\mathbb{E}\int_D \pi_h[\varphi_p(u_h(x,t))] \dd x \leq \frac{1}{2} \,\mathbb{E}
\sum_{k=1}^M \int_0^t \int_D \pi_h[\varphi_p''(u_h(x,s))(f(u_h(x,s))e_k(x))^2]\dd x \dd s
\end{aligned}
\end{align*}
for all $t \in [0,T]$.
In view of \eqref{eq:f} and \eqref{eq:noise}, we obtain the following bound: 
\begin{align}
\begin{aligned}\label{Ito4}
\mathbb{E}\, \int_D \pi_h (\varphi_p(u_h(x,t)))\dd x\leq \frac{1}{2} M\,c_e^2\, c_f^2 \,\mathbb{E}
\int_0^t \int_D \pi_h[\varphi_p''(u_h(x,s))(u_h(x,s))^2]\dd x \dd s,\quad t \in [0,T].
\end{aligned}
\end{align}
Clearly, $\varphi_p''(s)s^2 = 0 = \varphi_p(s)$ for all $s \geq 0$ and all $p > 0$. Similarly, $\varphi_p''(s)s^2 = 6\varphi_p(s)$ for all $s \in [-1/p,0]$ and all $p > 0$. A simple calculation also shows that $\varphi_p''(s)s^2 \leq 6 \varphi_p(s)$ for all $s \leq -1/p$ and all $p > 0$; this follows by noting that $6 \varphi_p(s) - \varphi_p''(s)s^2 = 2(s+1/p)(s+1/(2p)) \geq 0$ for all $s\leq -1/p$ and all $p > 0$. Hence, $0 \leq \varphi_p''(s)s^2 \leq 6 \varphi_p(s)$ for all $s \in \mathbb{R}$ and all $p > 0$. Using this inequality on the right-hand side of \eqref{Ito4} we deduce (assuming henceforth that $p > 0$) that
\begin{align}
\begin{aligned}\label{Ito5}
\mathbb{E}\, \int_D \pi_h (\varphi_p(u_h(x,t)))\dd x\leq 3 M\,c_e^2\, c_f^2 \,\mathbb{E}
\int_0^t \int_D \pi_h[\varphi_p(u_h(x,s))]\dd x \dd s,\quad t \in [0,T].
\end{aligned}
\end{align}
Defining 
\[X_p(t):= \mathbb{E}\, \int_D \pi_h (\varphi_p(u_h(x,t)))\dd x,\] 
inequality \eqref{Ito5} can be rewritten as
\[X_p(t)\leq 3 M\,c_e^2\, c_f^2 \, \int_0^t X_p(s)\dd s.\]
Observing that by construction $X_p(t)\geq 0$, Gr\"onwall's lemma implies that $X_p(t)=0$ for all $t \in [0,T]$; thus,
\begin{align}\label{eq-zero}
\mathbb{E}\, \int_D \pi_h (\varphi_p(u_h(x,t)))\dd x = 0\quad \mbox{for all $t \in [0,T]$}.
\end{align}
We shall now pass to the limit $p \rightarrow +\infty$, using Lebesgue's dominated convergence theorem. To this end, we need to check that the hypotheses of Lebesgue's dominated convergence hold.  
We begin by noting that~$\varphi_p(s) \leq s^2 +1$ for all $s \in \mathbb{R}$ and $\mathbb{E}\int_D \pi_h[(u_h(x,t))^2] \dd x \leq C < \infty$ for all $t \in [0,T]$ (which follows from the energy estimate~\eqref{eq:est11} of Proposition~\ref{w-p-uh}); 
furthermore, $\lim_{p \rightarrow \infty} \varphi_p(s) = \frac{1}{2}([s]_{-})^2$ for all $s \in \mathbb{R}$. By passing to the limit $p \to +\infty$ we thus deduce from \eqref{eq-zero} that
\[ \mathbb{E}\int_D \pi_h[([u_h(x,t)]_{-})^2]\dd x = 0\quad \mbox{for all $t \in [0,T]$}. \]

Therefore, as a (nonnegative) random function, 
\[ \int_D \pi_h[([u_h(x,t)]_{-})^2]\dd x = 0\quad \mbox{almost surely, for all $t \in [0,T]$}.\]
Hence, $\pi_h[([u_h(x,t)]_{-})^2]\equiv 0$ on $\overline{D}$ almost surely for all $t \in [0,T]$, and therefore (owing to the linear independence of the finite element basis functions $\phi_1, \ldots, \phi_N$), also $[u_h(x_i,t)]_{-}=0$ almost surely at all mesh points $x_i$, $i \in \{1,\ldots,N\}$ and for all $t \in [0,T]$. Therefore, because $u_h(\cdot,t)$ is a continuous piecewise affine function for all $t \in [0,T]$, nonnegativity at all points in the mesh implies that $u_h(x,t) \geq 0$ for all $(x,t) \in \overline{D} \times [0,T]$. This concludes the proof of Theorem~\ref{thm:nonnegativity}.
\end{proof}

\begin{remark}
We mentioned at the beginning of Section~\ref{Sect:continousProblem} that variants of our original equation~\eqref{eq:contSPDE}$_1$ could be considered. The addition of a term $\,c\,u$ to the left-hand side of the equation with a real-valued function $c \in L^\infty(0,T; C(\overline{D}; \mathbb{R}_{\geq 0}))$ does not affect our results, and the proof presented above can be easily modified to include such a term. However, there is a technicality that we need to highlight. It should be noted that $\varphi_p$ has been constructed in such a way that we also have~$\varphi_p'(s) s \geq 0$ for all $s \in \mathbb{R}$. This allows us to show nonnegativity of the additional term
\[\int_0^t  \int_D\pi_h[\varphi_p'(u_h(x,s)) c(x,s) u_h(x,s)]\dd x \dd s,\]
which then appears on the left-hand side of It$\hat{\rm{o}}$'s formula~\eqref{eq:new-ito}, and which then, thanks to its nonnegativity, does not affect the proof of Theorem \ref{thm:nonnegativity}
presented above.
\end{remark}

\section{Error estimate}\label{Section:errorEstimates}

The purpose of this section is to establish our main convergence result (Theorem~\ref{thm:error} below). 
We begin by stating and proving three preliminary lemmas; we will then formulate and prove the main result of the section.

\subsection{Three lemmas regarding mass-lumping}

Our first technical lemma establishes the equivalence of the norms $\|\cdot\|_h$ (defined in~\eqref{eq:discrete-norm}) and the $L^2(D)$ norm $\|\cdot\|$ on $V_h$, with norm-equivalence constants that are independent of $h$.
To keep the exposition self-contained, we have included its proof.

\begin{lemma}\label{L2-h-norm-comparison}
There exists a positive constant $C$, independent of $h$, such that, for any function $v_h  \in V_h$ we have
     \begin{equation}\label{eq:17}
                \|v_h\|   \leq \|v_h\|_h \leq C \|v_h\|.  
    \end{equation}
In addition,  for any $v \in C_0(\overline{D})$, 
  \begin{equation}\label{eq:17bis}
  \|\pi_h v\| \leq \|v\|_h.
  \end{equation}
\end{lemma}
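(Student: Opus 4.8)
The plan is to prove \eqref{eq:17} by localizing both $\|v_h\|^2$ and $\|v_h\|_h^2$ to individual simplices and comparing the two resulting quadratic forms in the nodal values of $v_h$, and then to obtain \eqref{eq:17bis} from the lower bound in \eqref{eq:17} together with the orthogonal-projection property of $\pi_h$ recorded after \eqref{L2-projection-h}. First I would fix a simplex $K \in \mathcal{T}_h$ with vertices $x_0^K, \ldots, x_d^K$ and associated barycentric basis functions $\phi_0^K, \ldots, \phi_d^K$, and set $V_j := v_h(x_j^K)$. Using the elementary integration identities $\int_K \phi_j^K \dd x = |K|/(d+1)$ and $\int_K \phi_i^K \phi_j^K \dd x = \frac{|K|}{(d+1)(d+2)}(1 + \delta_{i,j})$, and noting that $\pi_h(v_h^2)|_K = \sum_{j=0}^d V_j^2 \phi_j^K$ because the interpolant reproduces the nodal values $V_j^2$, I would compute the lumped and the genuine $L^2$ local contributions
\[ L_K := \int_K \pi_h(v_h^2) \dd x = \frac{|K|}{d+1}\sum_{j=0}^d V_j^2, \qquad M_K := \int_K v_h^2 \dd x = \frac{|K|}{(d+1)(d+2)}\Bigl[\Bigl(\sum_{j=0}^d V_j\Bigr)^2 + \sum_{j=0}^d V_j^2\Bigr]. \]

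Writing $S := \sum_{j=0}^d V_j$ and $Q := \sum_{j=0}^d V_j^2$, the two local inequalities I need reduce to elementary facts: $M_K \leq L_K$ is equivalent to $S^2 \leq (d+1)Q$, which is exactly the Cauchy--Schwarz inequality for the $(d+1)$-vector of nodal values, while $L_K \leq (d+2)M_K$ is equivalent to $0 \leq S^2$. Since $\pi_h(v_h^2) \geq 0$ (nonnegative nodal values interpolated by nonnegative basis functions), one has $\|v_h\|_h^2 = \int_D \pi_h(v_h^2)\dd x = \sum_K L_K$, while $\|v_h\|^2 = \sum_K M_K$; summing the two local bounds over $K \in \mathcal{T}_h$ then yields $\|v_h\|^2 \leq \|v_h\|_h^2 \leq (d+2)\|v_h\|^2$, that is \eqref{eq:17} with $C = \sqrt{d+2}$, a constant depending only on $d \in \{1,2,3\}$ and therefore independent of $h$.

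For \eqref{eq:17bis} I would apply the lower bound just established to the finite element function $v_h = \pi_h v \in V_h$, giving $\|\pi_h v\| \leq \|\pi_h v\|_h$, and then invoke the projection bound $\|\pi_h v\|_h \leq \|v\|_h$ recorded after \eqref{L2-projection-h}; chaining the two inequalities gives $\|\pi_h v\| \leq \|v\|_h$. One may in fact observe that $\|\pi_h v\|_h = \|v\|_h$, since both sides depend only on the nodal values $v(x_j)$, which $\pi_h v$ reproduces exactly.

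Since the result is the standard equivalence of the lumped-mass and $L^2$ norms on $V_h$, I do not expect a genuine obstacle; the only point demanding care is ensuring that the equivalence constants depend on $d$ alone. This is exactly what the simplex-by-simplex splitting delivers: the geometry of $K$ enters the comparison of $L_K$ and $M_K$ only through the common prefactor $|K|$, which cancels, so the ratio is controlled uniformly in $K$ (and hence in $h$) by the two elementary inequalities above. Note also that, in contrast to the stiffness matrix $A$ of \eqref{eq:matrixA}, the local mass forms $L_K$ and $M_K$ are symmetric and positive, so the non-symmetry discussed earlier plays no role here.
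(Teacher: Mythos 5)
Your proof is correct, but it takes a genuinely different route from the paper's. The paper proves \eqref{eq:17bis} first, via the pointwise Cauchy--Schwarz bound $|\pi_h v(x)|^2 \leq \pi_h[v^2](x)$ on each simplex (using the partition of unity $\sum_{j}\phi_j^K \equiv 1$), integrates, and obtains the first inequality of \eqref{eq:17} as the special case $v = v_h$; for the second inequality of \eqref{eq:17} it combines $\|\pi_h[v_h^2]\|_{L^\infty(K)} \leq \|v_h\|_{L^\infty(K)}^2$ with the inverse estimate $\|v_h\|_{L^\infty(K)} \leq C|K|^{-1/2}\|v_h\|_{L^2(K)}$, so its constant comes from shape-regularity and is left implicit. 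You instead compute the local lumped and consistent mass contributions exactly, $L_K = \frac{|K|}{d+1}Q$ and $M_K = \frac{|K|}{(d+1)(d+2)}(S^2+Q)$, and reduce both inequalities to the elementary facts $S^2 \leq (d+1)Q$ (Cauchy--Schwarz on the nodal vector) and $0 \leq S^2$; this yields the explicit and in fact sharp constant $C=\sqrt{d+2}$ (equality occurs when the nodal values on a simplex sum to zero, e.g. $V_0=1$, $V_1=-1$ for $d=1$), and it shows that the equivalence \eqref{eq:17} requires no shape-regularity at all, since the $|K|$ factors cancel simplex by simplex. Your derivation of \eqref{eq:17bis} reverses the paper's logical order: you deduce it from the lower bound in \eqref{eq:17} together with $\|\pi_h v\|_h = \|v\|_h$ (both sides are integrals of the same piecewise affine function, namely the interpolant with nodal values $v(x_j)^2$), which is legitimate since the projection property $\|\pi_h v\|_h \leq \|v\|_h$ is recorded in the paper before the lemma, and your equality argument is in any case self-contained. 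What each approach buys: yours gives explicit, dimension-only, sharp constants and a statement free of mesh conditions; the paper's pointwise argument is shorter for the lower bound and reaches arbitrary $v \in C_0(\overline{D})$ directly, without passing through the projector. Your remark that $\pi_h(v_h^2) \geq 0$, so that the $L^1$ norm in the definition \eqref{eq:discrete-norm} of $\|\cdot\|_h$ reduces to a plain integral, is a small but necessary point of care, and you handled it correctly.
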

\begin{proof}
We begin by showing~\eqref{eq:17bis}, which will then, as a special case, imply the first inequality in \eqref{eq:17}.
Suppose that $K$ is a (closed) simplex in the triangulation $\mathcal{T}_h$ of $\overline{D}$, with vertices $x^K_j$, $j=0,\ldots,d$, and $v \in C_0(\overline{D})$. Then, for all~$x \in K$,
\begin{align*}
|\pi_hv(x)|^2 &=   \left| \sum_{j=0}^d v(x^K_j) \sqrt{\phi^K_j(x)} \sqrt{\phi^K_j(x)} \right|^2 \\
&\leq \left( \sum_{j=0}^d [v(x^K_j)]^2 \phi^K_j(x)\right) \left( \sum_{j=0}^d \phi^K_j(x)\right)  \\
&= \pi_h[v^2(x)].
\end{align*}
Hence, we have that 
\begin{align*}
\|\pi_h v\|^2   = \sum_{K \in \mathcal{T}_h}\int_K |\pi_h v(x)|^2 \dd x  \leq    \sum_{K \in \mathcal{T}_h}\int_K \pi_h [v^2(x)] \dd x =   \|v\|^2_h. 
\end{align*}
In particular, with $v= v_h \in {V}_h$, we have  
\[ \|v_h\|   = \|\pi_h v_h\|   \leq \|v_h\|_h, \]
which is the first inequality in \eqref{eq:17}. 

In order to prove the second inequality in \eqref{eq:17}, we first remark that 
$\|\pi_h [v_h^2]\|_{L^\infty(K)} \leq \|v_h\|^2_{L^\infty(K)}$. Indeed, 
\begin{align*}
\|\pi_h[v_h^2]\|_{L^\infty(K)} &= \max_{x \in K} \sum_{j=0}^d v_h^2(x_j^K) \phi_j^K(x)   \leq \max_{j \in \{0,1,\ldots,d\}} v_h^2(x_j^K)\max_{x \in K} \sum_{j=0}^d \phi_j^K(x)\\ &=  \max_{j \in \{0,1,\ldots,d\}} v_h^2(x_j^K) \times 1 \leq \|v_h\|^2_{L^\infty(K)}. 
\end{align*}
We therefore have 
\begin{align*}
\|v_h\|^2_h = \sum_{K \in \mathcal{T}_h} \int_K \pi_h[v_h^2(x)] \dd x & \leq \sum_{K \in \mathcal{T}_h} |K|\, \|\pi_h[v_h^2]\|_{L^\infty(K)} \leq \sum_{K \in \mathcal{T}_h} |K|\, \|v_h\|^2_{L^\infty(K)}\\
&\leq C\,\sum_{K \in \mathcal{T}_h} |K|\, |K|^{-1} \|v_h\|^2_{L^2(K)} = C\|v_h\|^2,
\end{align*}
where the last inequality holds thanks to the assumed shape-regularity of $\mathcal{T}_h$, which implies the existence of  a positive constant $C$ such that, for each closed  simplex $K \in \mathcal{T}_h$,
\[\|v_h\|_{L^\infty(K)} \leq C |K|^{-\frac{1}{2}}\|v_h\|_{L^2(K)}.\]
This completes the proof of the lemma. 
\end{proof}

The second lemma estimates the closeness of $(g,v_h)$ and $(g,v_h)_h$ for $g \in W^{2,\infty}(D)$ and $v_h \in V_h$.

\begin{lemma}\label{L2-h-scalar-product}
    There exists a constant $C$,  independent of $h= \max_{K \in \mathcal{T}_h} h_K$, 
    such that for any function $g \in W^{2,\infty}(D)$,  and for any $v_h \in V_h$, we have 
    \begin{align}\label{L2-h-comparison}
    |(g,v_h) - (g,v_h)_h| \leq C  h^2 \left(|g|_{W^{2,\infty}(D)}\|v_h\| + |g|_{W^{1,\infty}(D)}\|\nabla v_h\| \right).
    \end{align}
\end{lemma}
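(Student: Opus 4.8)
The plan is to recognize the mass-lumped inner product as vertex-rule quadrature, so that the difference $(g,v_h)-(g,v_h)_h$ is exactly the sum over simplices of the interpolation error for the integrand $g v_h$. Since $g \in W^{2,\infty}(D)\hookrightarrow C(\overline{D})$ and $v_h \in V_h \subset C_0(\overline{D})$, the product $g v_h$ is continuous, $\pi_h(g v_h)$ is well defined, and one may write
\[
(g,v_h) - (g,v_h)_h = \int_D \big(g v_h - \pi_h(g v_h)\big)\dd x = \sum_{K \in \mathcal{T}_h} \int_K (I-\pi_h)(g v_h) \dd x.
\]
I would then bound each element contribution by $\big|\int_K (I-\pi_h)(g v_h)\dd x\big| \le |K|\,\|(I-\pi_h)(g v_h)\|_{L^\infty(K)}$, and invoke the standard piecewise-affine interpolation estimate $\|(I-\pi_h)w\|_{L^\infty(K)} \le C h_K^2 |w|_{W^{2,\infty}(K)}$, valid for $w\in W^{2,\infty}(K)$ with a constant depending only on the shape-regularity of $\mathcal{T}_h$. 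Note that $g v_h \in W^{2,\infty}(K)$ since $g\in W^{2,\infty}(K)$ and $v_h$ is a polynomial on $K$.

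The key step is to compute $|g v_h|_{W^{2,\infty}(K)}$: because $v_h$ is affine on each simplex $K$, its second derivatives vanish on $K$, so by the product rule the second derivatives of $g v_h$ involve only the terms $(D^2 g)\,v_h$ and $(Dg)\,(D v_h)$. This gives
\[
|g v_h|_{W^{2,\infty}(K)} \le C\Big( |g|_{W^{2,\infty}(K)} \|v_h\|_{L^\infty(K)} + |g|_{W^{1,\infty}(K)} \|\nabla v_h\|_{L^\infty(K)}\Big),
\]
which is precisely the structure appearing on the right-hand side of the claim. This cancellation of the would-be worst term ($D^2 v_h$) is what preserves the full $O(h^2)$ order despite $v_h$ being merely piecewise affine, and I expect it to be the crux of the argument.

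To reach the $L^2$ norms of $v_h$ and $\nabla v_h$ as stated, I would convert the $L^\infty$ norms using shape-regularity: the inverse estimate $\|v_h\|_{L^\infty(K)} \le C|K|^{-1/2}\|v_h\|_{L^2(K)}$ already used in Lemma~\ref{L2-h-norm-comparison}, together with the identity $\|\nabla v_h\|_{L^\infty(K)} = |K|^{-1/2}\|\nabla v_h\|_{L^2(K)}$, valid since $\nabla v_h$ is constant on $K$. Combining these with $h_K \le h$ and $|g|_{W^{2,\infty}(K)} \le |g|_{W^{2,\infty}(D)}$ yields, per element, a bound of the form $C h^2 |K|^{1/2}\big(|g|_{W^{2,\infty}(D)}\|v_h\|_{L^2(K)} + |g|_{W^{1,\infty}(D)}\|\nabla v_h\|_{L^2(K)}\big)$. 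Finally I would sum over $K$ and apply the discrete Cauchy--Schwarz inequality, using $\sum_K |K| = |D|$, to collect $\sum_K |K|^{1/2}\|v_h\|_{L^2(K)} \le |D|^{1/2}\|v_h\|$ and likewise for the gradient term; absorbing $|D|^{1/2}$ into $C$ then gives the assertion.
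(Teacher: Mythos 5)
Your proof is correct and follows essentially the same route as the paper: both recognize $(g,v_h)-(g,v_h)_h$ as the integral of the interpolation error $(I-\pi_h)(gv_h)$ and hinge on the same crux, namely that $D^2 v_h = 0$ on each simplex, so the product rule leaves only the $(D^2g)\,v_h$ and $(Dg)\,(Dv_h)$ terms. The only difference is bookkeeping: the paper applies Cauchy--Schwarz globally and the $L^2$--$H^2$ interpolation bound $\|\varphi-\pi_h\varphi\|_{L^2(K)} \leq C h_K^2 |\varphi|_{H^2(K)}$, which lands directly on the stated $L^2$ norms, whereas you use the $L^\infty$--$W^{2,\infty}$ bound elementwise and then need inverse estimates and a discrete Cauchy--Schwarz to convert back to $\|v_h\|$ and $\|\nabla v_h\|$ --- a slightly longer but equally valid path.
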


\begin{proof} We begin by observing that, because $ W^{2,\infty}(D)\hookrightarrow C(\overline{D})$, the inner product $(g,v)_h$ is well-defined. Next, because $v_h(x)=0$ for all $x \in \overline{D}\setminus D_h$, whereby also $(gv_h)(x)=0$ for all $x \in \overline{D}\setminus D_h$, we have that
\begin{align*} 
|(g,v_h) - (g,v_h)_h| & = \left|\int_{D} (gv_h)(x) - \pi_h(g v_h)(x) \dd x \right| \leq \int_{D_h} | (gv_h)(x) - \pi_h(g v_h)(x)| \dd x\\
& \leq   |D|^{\frac{1}{2}} \left(\sum_{K \in \mathcal{T}_h} \|(gv_h) - \pi_h(g v_h)\|^2_{L^2(K)} \right)^{\frac{1}{2}}\\
& \leq C h^2 \left(\sum_{K \in \mathcal{T}_h} |gv_h|^2_{H^2(K)} \right)^{\frac{1}{2}}\\
& \leq C  h^2 \left(\sum_{K \in \mathcal{T}_h}  \left(|g|^2_{W^{2,\infty}(D)}\|v_h\|^2_{L^2(K)} + |g|^2_{W^{1,\infty}(D)}\|\nabla v_h\|^2_{L^2(K)} \right)\right)^{\frac{1}{2}}\\
& = C  h^2 \left(|g|^2_{W^{2,\infty}(D)}\|v_h\|^2 + |g|^2_{W^{1,\infty}(D)}\|\nabla v_h\|^2 \right)^{\frac{1}{2}}.
\end{align*}
Here we have used a standard finite element interpolation error bound for piecewise affine elements, which asserts that on a family of shape-regular simplicial partitions, one has that $\|\varphi - \pi_h \varphi\|_{L^2(K)} \leq C h_K^2|\varphi|_{H^2(K)}$, $h_K:=\mbox{diam}(K)$, $C$ is a positive constant independent of $h= \max_{K \in \mathcal{T}_h} h_K$ and of $\varphi \in H^2(D)$ (continuously embedded in~$C(\overline D)$ for $d=1, 2, 3$); see, for example, Corollary 4.4.24 on p.110 in \cite{MR2373954}. Thus, we have shown \eqref{L2-h-comparison}.
\end{proof}

To state our third and final technical lemma, we first introduce the discrete $L^2$-projection $Q_h: L^2(D) \to V_h$ defined by
\begin{equation}\label{mixed-L2-proj}
    (Q_h w, v_h)_h := (w, v_h) \quad \forall\,  v_h \in V_h.
\end{equation}

A straightforward calculation shows that, for $w \in L^2(D)$, 
\[ (Q_h w)(x) = \sum_{j=1}^N W_j \phi_j(x), \quad \mbox{where} \quad 
W_j= \frac{\int_{\mathrm{supp}~\!\!\phi_j} w(y) \phi_j(y) \dd y}{\int_{\mathrm{supp}~\!\!\phi_j} \phi_j(y)\dd y}.\]
As $\mathrm{supp}~\!\phi_j \subset \overline{D_h}\subset \overline{D}$ and $\int_{\mathrm{supp}~\!\phi_j} \phi_j(y) \dd y >0$ for all $j=1,\ldots,N$, $W_j$ is correctly defined. This means that $Q_h$ is a quasi-interpolation operator. Then we have the following result. 
\begin{lemma}\label{lemma:g}
Suppose that $g \in W^{1,\infty}_0(\overline{D})$.  Then
\begin{equation}
\label{eq:lemmag}
 \| \pi_h g - Q_hg \|_h \leq  h \|\nabla g\|_{L^\infty(D)}.
 \end{equation}
\end{lemma}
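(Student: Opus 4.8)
The plan is to reduce the estimate to a bound on the nodal coefficients of $\pi_h g - Q_h g$, exploiting the fact that the mass-lumped norm $\|\cdot\|_h$ from \eqref{eq:discrete-norm} is diagonal in the nodal basis. Indeed, for any $v_h = \sum_{j=1}^N c_j\phi_j \in V_h$ the interpolant $\pi_h(v_h^2)$ is the piecewise affine function with nodal values $c_j^2$ (since $\phi_i(x_j)=\delta_{ij}$ gives $v_h(x_j)=c_j$), so $\|v_h\|_h^2 = \int_D \pi_h(v_h^2)\dd x = \sum_{j=1}^N c_j^2 \int_D \phi_j\dd x$. Thus it suffices to control the nodal values of the difference $\pi_h g - Q_h g$.

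First I would read off the coefficients of the two operators: $\pi_h g$ has coefficients $g(x_j)$, whereas, by the explicit formula for $Q_h$ following \eqref{mixed-L2-proj}, $Q_h g$ has coefficients $W_j = \big(\int_{\mathrm{supp}\,\phi_j} g\phi_j\big)/\big(\int_{\mathrm{supp}\,\phi_j}\phi_j\big)$. Hence the difference has nodal coefficients $g(x_j)-W_j$, and the norm identity above gives $\|\pi_h g - Q_h g\|_h^2 = \sum_{j=1}^N (g(x_j)-W_j)^2\int_D \phi_j\dd x$.

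The crux of the argument, and the only step that is not pure bookkeeping, is the pointwise nodal estimate $|g(x_j)-W_j| \le h\,\|\nabla g\|_{L^\infty(D)}$. The key observation is that $W_j$ is a weighted average of the values of $g$ over the patch $\mathrm{supp}\,\phi_j$ with the nonnegative weight $\phi_j$, so that $g(x_j)-W_j = \big(\int_{\mathrm{supp}\,\phi_j}(g(x_j)-g(y))\phi_j(y)\dd y\big)/\big(\int_{\mathrm{supp}\,\phi_j}\phi_j(y)\dd y\big)$. Since $g\in W^{1,\infty}_0(\overline D)$ is Lipschitz with constant $\|\nabla g\|_{L^\infty(D)}$, one has $|g(x_j)-g(y)|\le \|\nabla g\|_{L^\infty(D)}\,|x_j-y|$, and every $y\in\mathrm{supp}\,\phi_j$ lies in a simplex $K$ having $x_j$ as a vertex, whence $|x_j-y|\le \mathrm{diam}(K)\le h$. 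A weighted average of quantities each bounded by $h\|\nabla g\|_{L^\infty(D)}$ is bounded by the same quantity, which yields the claim. I expect this patch-average step, namely correctly identifying $W_j$ as a local average and bounding the patch diameter by the mesh size $h$, to be the main point requiring care; everything else is mechanical.

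Finally I would substitute the nodal bound into the norm identity to obtain $\|\pi_h g - Q_h g\|_h^2 \le h^2\|\nabla g\|_{L^\infty(D)}^2\sum_{j=1}^N\int_D\phi_j\dd x = h^2\|\nabla g\|_{L^\infty(D)}^2\int_D\big(\textstyle\sum_{j=1}^N\phi_j\big)\dd x$, and then use $0\le\sum_{j=1}^N\phi_j\le 1$ on $D$; taking square roots delivers the asserted estimate, with the dimensional factor $|D|^{1/2}$ arising from $\big(\sum_j\int_D\phi_j\big)^{1/2}\le |D|^{1/2}$.
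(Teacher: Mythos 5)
Your proof follows the same route as the paper's: both expand $\pi_h g - Q_h g$ in the nodal basis, write the $j$-th coefficient as the weighted average $\bigl(\int_{\mathrm{supp}\,\phi_j}[g(x_j)-g(y)]\,\phi_j(y)\dd y\bigr)\big/\bigl(\int_{\mathrm{supp}\,\phi_j}\phi_j(y)\dd y\bigr)$, and bound it by $h\,\|\nabla g\|_{L^\infty(D)}$ using the Lipschitz property of $g$ together with the fact that every point of the patch $\mathrm{supp}\,\phi_j$ lies within distance $h$ of $x_j$. You actually go one step further than the paper, whose proof stops at this nodal estimate and declares the lemma proved: you carry out the conversion from the nodal bound to the $\|\cdot\|_h$ norm, and that step honestly produces the extra factor $\bigl(\sum_{j}\int_D\phi_j\dd x\bigr)^{1/2}\leq |D|^{1/2}$. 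Be aware, though, that your closing sentence is then inconsistent with itself: what you have proved is $\|\pi_h g - Q_h g\|_h \leq h\,\|\nabla g\|_{L^\infty(D)}\,|D|^{1/2}$, which coincides with the asserted estimate only when $|D|\leq 1$ (as happens in the paper's experiments, where $D=(0,1)^2$); for a general domain the lemma as stated requires this constant, a point that the paper's own proof, by ending at the nodal bound, silently glosses over.
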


\begin{proof} 
Expanding $\pi_h g \in V_h$ and $Q_h g \in V_h$ in terms of the basis $\{\phi_1,\ldots, \phi_N\}$ that spans $V_h$, we have that 
\begin{align*}
(\pi_h g) (x) - (Q_hg)(x) &= \sum_{j=1}^N g(x_j)\phi_j(x) - \sum_{j=1}^N \frac{\int_{\mathrm{supp}~\!\!\phi_j} g(y) \phi_j(y) \dd y}{\int_{\mathrm{supp}~\!\!\phi_j} \phi_j(y)\dd y} \phi_j(x)\\
&= \sum_{j=1}^N \frac{\int_{\mathrm{supp}~\!\!\phi_j} [g(x_j) - g(y)] \phi_j(y) \dd y}{\int_{\mathrm{supp}~\!\!\phi_j} \phi_j(y)\dd y} \phi_j(x),\quad x \in D.
\end{align*}
In particular, because $\mathrm{supp}~\!\phi_j \subset \overline{D_h}\subset \overline{D}$, we have that 
$(\pi_h g) (x) =0 $ and $ (Q_hg)(x) =0$ for all $x \in \overline{D}\setminus D_h$.
Therefore, 
\begin{align*}
(\pi_h g) (x_i) - (Q_hg)(x_i) = \frac{\int_{\mathrm{supp}~\!\!\phi_i} [g(x_i) - g(y)] \phi_i(y) \dd y}{\int_{\mathrm{supp}~\!\!\phi_i} \phi_i(y)\dd y}, \quad i=1,\ldots, N.
\end{align*}
This implies that 
\begin{align*}
|(\pi_h g) (x_i) - (Q_hg)(x_i)| & \leq  \frac{\int_{\mathrm{supp}~\!\!\phi_i} |g(x_i) - g(y)| \phi_i(y) \dd y}{\int_{\mathrm{supp}~\!\!\phi_i} \phi_i(y)\dd y}\\
&\leq \|\nabla g\|_{L^\infty(D)} \frac{\int_{\mathrm{supp}~\!\!\phi_i} |x_i - y| \phi_i(y) \dd y}{\int_{\mathrm{supp}~\!\!\phi_i} \phi_i(y)\dd y}\\ 
&\leq h \|\nabla g\|_{L^\infty(D)}\frac{\int_{\mathrm{supp}~\!\!\phi_i} \phi_i(y) \dd y}{\int_{\mathrm{supp}~\!\!\phi_i} \phi_i(y)\dd y} =  h \|\nabla g\|_{L^\infty(D)}, \quad i=1,\ldots,N,
\end{align*}
which concludes the proof. 
\end{proof}

We are now ready to embark on the proof of convergence of the numerical method. To this end, and as is customary, we need to make an additional assumption on the regularity of the exact solution $u$.
Unlike deterministic evolution problems, in the case of the stochastic partial differential equation under consideration here the solution can only be expected to possess smoothness with respect to the spatial variable $x \in D$, which is inherited from the regularity of the initial datum $u_0$, the smoothness of the function $f$, and the regularity of the functions $e_k$, $k=1,\ldots,M$, which appear in the definition of the finite-dimensional white noise $W^M$ stated in equation \eqref{noise_expansion}. To proceed, we shall therefore adopt the following regularity hypothesis.

\begin{assumption}[Regularity of the solution]\label{ass:reg}
    We shall assume in what follows that $u \in L^2(\Omega; L^2(0,T; H^2(D)\cap H^1_0(D)))$ 
    and $\Delta u \in L^2(\Omega; L^2(0,T;W^{2,\infty}(D)\cap W^{1,\infty}_0(D)))$.
\end{assumption}

When $D=\mathbb{T}^d$ (the $d$-dimensional torus) and instead of a homogeneous Dirichlet boundary condition on $\partial D \times (0,T]$ a periodic boundary condition is imposed on $u$, suitable assumptions on the data from which the regularity $u \in L^2(\Omega; L^2(0,T; C^4(\overline{D})))$ can be inferred are stated in Corollary 2.2 on p.762 of \cite{MR3057153}. 
If, on the other hand, $D$ is a bounded domain in $\mathbb{R}^d$ with a smooth boundary and a homogeneous Dirichlet boundary condition is imposed, the spatial regularity of $u$ follows from Theorem 2.7 on p.1598 in \cite{MR3340199}. In the case of polygonal domains in $\mathbb{R}^2$, conical domains in $\mathbb{R}^d$ and, more generally, Lipschitz domains in $\mathbb{R}^d$, the regularity theory of SPDEs is less complete; the only results that we are aware of are those stated in \cite{MR3004660,MR2875351,MR3109621,MR4001061,MR3757679,MR4483346}.

\begin{theorem}
\label{thm:error} Suppose that $u_0 \in C_0(\overline{D};\mathbb{R}_{\geq 0})$. Under the additional regularity Assumption \ref{ass:reg}, the following error estimate holds:
   \begin{align}\label{thm_error}
   \mathbb{E}(\|u(t) -u_h(t)\|^2) + 2 \int_0^t \mathbb{E}(\| \nabla (u -u_h)(s)\|^2) \dd s \leq C h^2 ,
\end{align}
where $C=C(T, u)$ is independent of the mesh size~$h$.
\end{theorem}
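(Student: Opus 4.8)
The plan is to estimate the error via an It\^o-type energy argument applied to the difference between the semidiscrete solution $u_h$ and a suitable finite element projection of the exact solution $u$. First I would introduce the splitting
\[
u - u_h = (u - Q_h u) + (Q_h u - u_h) =: \rho + \theta,
\]
where $Q_h$ is the discrete $L^2$-projection from \eqref{mixed-L2-proj}. The term $\rho$ is a purely deterministic (pathwise) approximation-theoretic error that can be controlled directly, using the regularity of $u$ in Assumption~\ref{ass:reg} together with standard interpolation estimates and the norm-equivalence Lemma~\ref{L2-h-norm-comparison}; Lemma~\ref{lemma:g} will be useful for comparing $Q_h$ with the nodal interpolant $\pi_h$. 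The bulk of the work is to bound the finite element part $\theta = Q_h u - u_h \in V_h$, and this is where the stochastic machinery enters.

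Next I would derive the equation satisfied by $\theta$. Testing the weak formulation \eqref{eq:continous} of the exact solution against $v_h \in V_h$ and subtracting the mass-lumped scheme \eqref{FEM-formulation-h-product}, I obtain an It\^o equation for $\theta$ in the space $(V_h, (\cdot,\cdot)_h)$. Applying It\^o's formula (in the form already invoked in Proposition~\ref{w-p-uh}) to $\|\theta(t)\|_h^2$ produces, after taking expectations, an energy identity of the schematic form
\begin{align*}
\mathbb{E}\|\theta(t)\|_h^2 + 2\int_0^t \mathbb{E}\|\nabla\theta(s)\|^2\dd s
&= \mathbb{E}\|\theta(0)\|_h^2 + 2\int_0^t \mathbb{E}\,(\text{consistency terms},\,\theta)\dd s\\
&\quad + \int_0^t \mathbb{E}\sum_{k=1}^M \|(\text{noise discrepancy})_k\|_h^2 \dd s.
\end{align*}
The right-hand side collects several discrepancies: the inconsistency between the genuine inner product $(\cdot,\cdot)$ and the lumped one $(\cdot,\cdot)_h$ (controlled by Lemma~\ref{L2-h-scalar-product} at order $h^2$ using the $W^{2,\infty}$-regularity of $\Delta u$), the difference between the exact diffusion term and the discrete Dirichlet Laplacian acting on $Q_h u$, and the difference between $f(u)e_k$ and its lumped projection $\pi_h(f(u_h)e_k)$. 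The noise discrepancy I would split as $\pi_h(f(u)e_k) - \pi_h(f(u_h)e_k)$, which is Lipschitz in $u - u_h$ and hence absorbable via $\|\theta\|_h$ plus a $\rho$-contribution, together with a consistency term $f(u)e_k - \pi_h(f(u)e_k)$ bounded at order $h$ by interpolation.

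The strategy then is to bound every consistency term by $C h^2$ (or $Ch\cdot\|\theta\|_h$, handled by Young's inequality so that half of it is absorbed into $C h^2$ and the other half contributes a $\|\theta\|_h^2$ term), while the Lipschitz noise terms and the elliptic cross terms feed a factor $\mathbb{E}\|\theta(s)\|_h^2$ into the integral. After using norm-equivalence to pass between $\|\theta\|$ and $\|\theta\|_h$, and after discarding the nonnegative gradient term where convenient, Gr\"onwall's lemma closes the estimate and yields $\mathbb{E}\|\theta(t)\|_h^2 + \int_0^t \mathbb{E}\|\nabla\theta\|^2 \le C(T,u)h^2$; combining with the bound on $\rho$ via the triangle inequality gives \eqref{thm_error}. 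I expect the main obstacle to be the careful treatment of the mass-lumping consistency error together with the non-symmetry of the operator $A$: unlike the standard (non-lumped) Galerkin setting, here the discrete bilinear form is evaluated in the lumped inner product, so the usual Ritz-projection trick is unavailable and one must instead play off Lemma~\ref{L2-h-scalar-product} and Lemma~\ref{lemma:g} against the elevated regularity $\Delta u \in W^{2,\infty}$ to recover the full second order in $h^2$ in the energy norm. Ensuring that the noise term contributes only at order $h^2$ after squaring (rather than losing a factor $h$) is the other delicate point.
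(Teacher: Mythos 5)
Your overall architecture --- derive an It\^o equation for the discrete part of the error, apply It\^o's formula to its squared $\|\cdot\|_h$-norm, bound consistency terms with the three lemmas, and close with Gr\"onwall --- is the same as the paper's. The genuine gap is your choice of decomposition $u-u_h=(u-Q_hu)+(Q_hu-u_h)$. The operator $Q_h$ of \eqref{mixed-L2-proj} is a nodal-averaging quasi-interpolant that reproduces constants but \emph{not} affine functions: its nodal error satisfies $(Q_hw)(x_j)-w(x_j)=\nabla w(x_j)\cdot(b_j-x_j)+O(h^2)$, where $b_j$ is the $\phi_j$-weighted barycentre of $\mathrm{supp}\,\phi_j$, and on a general non-uniform shape-regular mesh the $O(h)$ vectors $b_j-x_j$ vary by $O(h)$ between neighbouring nodes. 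Consequently $\|\rho\|=\|u-Q_hu\|$ is only $O(h)$ (enough for the $L^2$ part of \eqref{thm_error}), but $\|\nabla\rho\|$ is in general only $O(1)$, and the tools you cite cannot do better: Lemma~\ref{lemma:g} combined with an inverse inequality gives $\|\nabla(\pi_hu-Q_hu)\|\leq Ch^{-1}\|\pi_hu-Q_hu\|_h\leq C\|\nabla u\|_{L^\infty(D)}$, with no power of $h$ left. This is fatal in two places: the drift consistency term $\int_0^t(\nabla\rho,\nabla v_h)\dd s$ in your $\theta$-equation cannot be made $O(h^2)$ after Young's inequality, and the final triangle inequality for the term $\int_0^t\E(\|\nabla(u-u_h)(s)\|^2)\dd s$ fails.

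A second, related problem is the noise term. In the $\theta$-equation the It\^o correction is $\sum_{k}\int_0^t\|Q_h(f(u)e_k)-\pi_h(f(u_h)e_k)\|_h^2\dd s$; after subtracting the Lipschitz part $\pi_h((f(u)-f(u_h))e_k)$ you are left with $Q_h(f(u)e_k)-\pi_h(f(u)e_k)$, \emph{not} with the interpolation error $f(u)e_k-\pi_h(f(u)e_k)$ (whose $\|\cdot\|_h$-norm is in fact zero, since $\|\cdot\|_h$ sees only nodal values). Controlling that mismatch via Lemma~\ref{lemma:g} requires $f(u)e_k\in W^{1,\infty}_0(D)$, i.e.\ Lipschitz regularity of $e_k$, which the paper does not assume ($e_k\in C(\overline D)$ only). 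The paper avoids both difficulties by splitting with the nodal interpolant, $u-u_h=\eta_h+\xi_h$ with $\eta_h:=u-\pi_hu$, using $Q_h$ only as a device to convert $(\cdot,\cdot)$ into $(\cdot,\cdot)_h$, and --- the key step you are missing --- applying \emph{both} $\pi_h$ and $Q_h$ to the SPDE itself, so that the $Q_h(f(u)e_k)$ contributions cancel identically and the noise coefficient becomes $\pi_h((f(u)-f(u_h))e_k)$. Its $\|\cdot\|_h$-norm is bounded by $c_fc_e\|u-u_h\|_h=c_fc_e\|\xi_h\|_h$, because $u=\pi_hu$ at the mesh nodes: the noise contributes no consistency error at all and is absorbed directly by Gr\"onwall, with no smoothness of $e_k$ needed. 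The elliptic part is then handled with $\|\nabla\eta_h\|\leq Ch\,|u|_{H^2(D)}$ (valid for the interpolant, unlike for $Q_h$) together with the extended projector $\tilde{\Pi}_h$, and the only genuine mass-lumping consistency term is $(\Delta u,\xi_h)_h-(\Delta u,\xi_h)$, which is where Lemma~\ref{L2-h-scalar-product} and the assumption $\Delta u\in W^{2,\infty}(D)$ enter.
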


\begin{proof}
We proceed similarly as in the deterministic setting and consider the decomposition
\begin{equation}
\label{eq:decomposition}
    u - u_h = (u - \pi_h u) + (\pi_h u-u_h) =: \eta_h + \xi_h.
\end{equation}
As $\pi_h$ is idempotent on the linear space $C(\overline{D})$, it follows that $\pi_h \eta_h(t)=0$, and, equivalently, $(\eta_h(t),v_h)_h=0$ for all $v_h \in V_h$ and for all $t \in [0,T]$.
Next, note that with $u_h(0)=u_h^0 :=\pi_hu_0$ for $u_0 \in C(\overline{D})$ we have $\xi_h(0) = u_h(0) - \pi_h u(0) = 0$. 

The term~$\eta_h$ within~\eqref{eq:decomposition} will be dealt with (in Step~4 below) using a classical deterministic argument, so let us start by concentrating on the estimation of the term~$\xi_h$.
Using \eqref{FEM-formulation-h-product} and \eqref{eq:continous}, the definition of the discrete Laplacian \eqref{def:Delta_h-h} and the fact that $\pi_h$ is idempotent, we observe that the process $\xi_h$ in $(V_h, \|\cdot\|_h)$ satisfies the following equation 
\begin{align*}
    ( \xi_h(t), v_h)_h - \int_0^t ( \Delta_h \xi_h(s),  v_h)_h \dd s 
    &= [(u(t),v_h)_h - (u(t),v_h)] - [  (u^0_h, v_h)_h -  (u_0, v_h)]\\
    &\quad   + \int_0^t \sum_{k=1}^M (f(u(s))e_k), v_h) \dd B_k(s)\\
   & \quad -\int_0^t \sum_{k=1}^M (\pi_h(f(u_h(s))e_k), v_h)_h \dd B_k(s)\\
   &\quad - \int_0^t (\nabla \eta_h(s), \nabla v_h)\dd s \quad \forall\,  v_h \in V_h.
\end{align*}
In order to estimate~$ \|\xi_h(t)\|^2_h$ (which will be the purpose of Step 3 below), we will have to apply It$\hat{\rm{o}}$'s formula, which requires, in Step~1 of the argument below, to rewrite this equation in terms of the inner product $(\cdot,\cdot)_h$ (which will be of the form~\eqref{eq:first-step} below) and next, in Step~2, to separate the drift and diffusion terms to obtain the form~\eqref{eq:second-step} below, which then represents the starting point of our error analysis.

\smallskip 

\noindent {\bf Step 1:} 
Using the operator $Q_h$ introduced in~\eqref{mixed-L2-proj},  we have
\begin{align}\label{eq:xih}
\begin{aligned}
     ( \xi_h(t), v_h)_h &- \int_0^t (\Delta_h \xi_h(s), v_h)_h  \dd s = (\pi_h u(t) - Q_h u(t), v_h)_h - ( \pi_h u_0-Q_h u_0,v_h )_h \\
    &\quad + \int_0^t \sum_{k=1}^M (Q_h(f(u(s))e_k) - \pi_h(f(u_h(s))e_k), v_h)_h  \dd B_k(s) \\
     &\quad - \int_0^t (\nabla \eta_h(s), \nabla v_h) \dd s\quad \forall\,  v_h \in V_h.
\end{aligned}
\end{align}
The last term involves the standard $L^2(D)$ inner product rather than the discrete inner product $(\cdot,\cdot)_h$. We shall therefore also express this last term in terms of the inner product $(\cdot,\cdot)_h$.
To this end, we let $\Pi_h: L^2(D) \to V_h$ denote the standard $L^2$-projector, defined by $(u,v_h) = (\Pi_h u, v_h)$
for all $v_h \in V_h$. 
Next, following \cite{MR1921920} (see also \cite{MR3154916}), we introduce an extension~$ \tilde{\Pi}_h$ of the $L^2$-projector $\Pi_h$ from $L^2(D)$ to the function space $H^{-1}(D):=[H^1_0(D)]^* \hookleftarrow [L^2(D)]^* \equiv L^2(D)\hookleftarrow H^1_0(D)$,  with values in~$V_h$ and defined as follows: 
\begin{equation*}
(\tilde{\Pi}_h w, v_h) = \langle w,v_h\rangle_{H^{-1}(D),H^1_0(D)} = (\nabla((-\Delta)^{-1}w),\nabla v_h) \quad \forall\,  (w,v_h) \in H^{-1}(D) \times V_h, 
\end{equation*}
where (the bounded linear operator) $(-\Delta)^{-1}: H^{-1}(D) \to H^1_0(D)$  denotes the inverse of the Dirichlet Laplacian $-\Delta: H^1_0(D) \to H^{-1}(D)$, which is clearly a bijection (e.g., by a straightforward application of the Lax--Milgram lemma).

Taking $w = -\Delta \eta_h \in H^{-1}(D)$ in the definition of $\tilde{\Pi}_h$ (which is possible because $\eta_h = u - \pi_h u \in H^1_0(D)$ and the Dirichlet Laplacian $\Delta$ maps $H^1_0(D)$ onto $H^{-1}(D)$), we have
\begin{align*}
     \int_0^t (\nabla \eta_h(s), \nabla v_h) \dd s &=  \int_0^t \langle - \Delta \eta_h(s), v_h \rangle_{H^{-1}(D), H^1_0(D)} \dd s \\
     &=  \int_0^t ( \tilde{\Pi}_h (-\Delta \eta_h(s)), v_h) \dd s \\
     &=  \int_0^t - (Q_h \tilde{\Pi}_h \Delta \eta_h, v_h)_h\quad \forall\,  v_h \in V_h.
\end{align*}

All the terms on the right-hand side of~\eqref{eq:xih} are now written in terms of the discrete inner product $(\cdot,\cdot)_h$. Hence, 
\begin{align}
\label{eq:first-step}
\xi_h(t) &- \int_0^t \Delta_h \xi_h(s) \dd s 
= \pi_h(u(t) - u_0) - Q_h(u(t) - u_0)\nonumber\\
    &+ \int_0^t \sum_{k=1}^M (Q_h(f(u(s))e_k) - \pi_h(f(u_h(s))e_k)) \dd B_k(s) + \int_0^t Q_h \tilde{\Pi}_h \Delta \eta_h(s) \dd s
\end{align}
as an equality in $\left(V_h,(\cdot,\cdot)_h\right)$. 

\smallskip 

\noindent {\bf Step 2:}
As was indicated above, we now get to our second modification, where we separate the drift and the diffusion terms within~\eqref{eq:first-step} (actually in the first two terms of the right-hand side of~\eqref{eq:first-step}). This is the step where the regularity Assumption \ref{ass:reg} is useful. 

We start by applying the linear and bounded mappings $\pi_h$ and $Q_h$ to \eqref{eq:contSPDE}. We note that because $u(\cdot,t)|_{\partial D} = 0$ for all $t \in [0,T]$, $f(0)=0$ and $e_k \in C(\overline{D})$ for $k=1,\ldots,M$, it follows that $\Delta u(\cdot,t)|_{\partial D}=0$ for all $t \in (0,T]$. We thus have
\begin{equation*}
    \pi_h (u(t) - u(0)) = \int_0^t {\pi}_h (\Delta u(s)) \dd s + \int_0^t \sum_{k=1}^M \pi_h (f(u)e_k) \dd B_k(s)
\end{equation*}
and 
\begin{equation*}
    Q_h (u(t) - u(0)) = \int_0^t {Q}_h (\Delta u(s)) \dd s + \int_0^t \sum_{k=1}^M Q_h (f(u)e_k) \dd B_k(s).
\end{equation*}
Therefore, 
\begin{align}
\label{eq:second-step}
&\xi_h(t) - \int_0^t \Delta_h \xi_h(s) \dd s = \int_0^t \big ({\pi}_h (\Delta u(s)) - {Q}_h (\Delta u(s))\big) \dd s \nonumber\\
&\quad + \int_0^t \sum_{k=1}^M \big(\pi_h (f(u(s))e_k)  - Q_h (f(u)e_k)\big) \dd B_k(s)\nonumber\\
    &\quad + \int_0^t \sum_{k=1}^M (Q_h(f(u(s))e_k) - \pi_h(f(u_h(s))e_k)) \dd B_k(s)  +  \int_0^t Q_h \tilde{\Pi}_h \Delta \eta_h(s) \dd s.\nonumber\\
&= \int_0^t \big ({\pi}_h (\Delta u(s)) -{Q}_h (\Delta u(s))\big) \dd s \nonumber\\
    &\quad + \int_0^t \sum_{k=1}^M \pi_h\big(f(u(s))e_k - f(u_h(s))e_k\big) \dd B_k(s) + \int_0^t Q_h \tilde{\Pi}_h \Delta \eta_h(s) \dd s.
\end{align}

\smallskip

\noindent {\bf Step 3:} We now intend to estimate~$ \|\xi_h(t)\|^2_h$. In order to apply It$\hat{\rm{o}}$'s formula in our semi-discrete setting, we consider the Gelfand triple $\mathcal{V}_h \hookrightarrow \mathcal{H}_h \hookrightarrow \mathcal{V}_h^*$, where now $\mathcal{V}_h$ denotes the linear space $V_h\subset H^1_0(D)$ equipped with the inner product $(\nabla \cdot\,,\, \nabla \cdot)$ understood as a Hilbert space, while $\mathcal{H}_h$ signifies $V_h$ equipped with the $L^2(D)$ discrete inner product $(\cdot,\cdot)_h$ understood as a Hilbert space into which $\mathcal{V}_h$ is continuously (and, trivially,) densely embedded. As $V_h$ is finite-dimensional, the topological dual, $\mathcal{V}_h^*$ of $\mathcal{V}_h$ is identified with $\mathcal{V}_h$. Therefore, while in the algebraic sense the equality sign in \eqref{eq:second-step} is just an equality between elements in $V_h$ (for each $t \in (0,T]$), in the topological sense it can
be understood as an equality in the dual space $\mathcal{V}_h^*$ featuring in the above Gelfand triple
(for each $t \in (0,T]$). Thanks to It$\hat{\rm o}$'s formula (cf. \cite{Krylov,MR651582})  applied on this Gelfand triple to $\|\xi_h\|_h^2$, we obtain
\begin{align}
\label{eq:T1234}
&    \|\xi_h(t)\|^2_h + 2 \int_0^t \| \nabla \xi_h(s)\|^2 \dd s \nonumber\\&= 2 \int_0^t \left\langle \pi_h(\Delta u(s)) - Q_h (\Delta u(s)),\xi_h(s)\right\rangle_{\mathcal{V}_h^*,\mathcal{V}_h}\!\!\dd s \nonumber\\
    &\quad + 2 \int_0^t \langle Q_h \tilde{\Pi}_h(\Delta \eta_h(s)),\xi_h(s)\rangle_{\mathcal{V}_h^*, \mathcal{V}_h} \!\dd s+ \int_0^t \sum_{k=1}^M\|\pi_h(f(u(s))e_k - f(u_h(s))e_k)\|^2_h \dd s + M_t \nonumber\\
    & =: \mathrm{T}_1 + \mathrm{T}_2 + \mathrm{T}_3 + M_t,
\end{align}
where $M_t$ a martingale term, which follows directly from the energy estimates \eqref{energy_estimate_u} and \eqref{eq:est11}. We now successively bound all terms.


From Assumption~\ref{ass:reg}, we have 
\begin{align*}
    \mathrm{T}_1&:=  2 \int_0^t  \langle  {\pi}_h (\Delta u(s))  - {Q}_h (\Delta u(s)), \xi_h(s) \rangle_{\mathcal{V}_h^*,\mathcal{V}_h}\dd  s 
        \\
&= 2 \int_0^t  \left( {\pi}_h (\Delta u(s))  - {Q}_h  (\Delta u(s)), \xi_h(s) \right)_h\dd s = 2 \int_0^t  \left( \Delta u(s), \xi_h(s) \right)_h   - \left(   \Delta u(s), \xi_h(s) \right)\dd s,
\end{align*}
where we have used that $\Delta u \in C(\overline{D}) \subset L^2(D)$. In order to bound this term, we use Lemma \ref{L2-h-scalar-product} with $g=\Delta u \in W^{2,\infty}(D)$ and~\eqref{eq:17} in Lemma \ref{L2-h-norm-comparison}. As a result we obtain 
\begin{align}
    T_1 &\leq 2 C h^2 \int_0^t\left(|\Delta u(s)|_{W^{2,\infty}(D)}\|\xi_h(s)\| + |\Delta u(s)|_{W^{1,\infty}(D)}\|\nabla \xi_h(s)\| \right)\dd s \nonumber\\
    &\leq C^2 h^4 \int_0^t \left(|\Delta u(s)|_{W^{2,\infty}(D)}^2 +|\Delta u(s)|_{W^{1,\infty}(D)}^2 \right)\dd s +  \frac{1}{2}\int_0^t \left(\|\xi_h(s)\|^2_h +\| \nabla \xi_h(s)\|^2 \right) \! \dd s.
\end{align}

The term $\mathrm{T}_2$ is bounded from above as follows
\begin{align*}
    \mathrm{T}_2&:= 2 \int_0^t \langle Q_h \tilde{\Pi}_h(\Delta \eta_h(s)),\xi_h(s)\rangle_{\mathcal{V}_h^*, \mathcal{V}_h} \dd s =  2 \int_0^t \left( Q_h \tilde{\Pi}_h(\Delta \eta_h(s)),\xi_h(s)\right)_{\mathcal{H}_h}  \dd s\\
    & =  2 \int_0^t \left( Q_h \tilde{\Pi}_h(\Delta \eta_h(s)),\xi_h(s)\right)_h  \dd s =  2 \int_0^t \left(\tilde{\Pi}_h(\Delta \eta_h(s)),\xi_h(s)\right)  \dd s\\
    &= -2 \int_0^t (\nabla \eta_h(s), \nabla \xi_h(s)) \dd s\\
    & \leq 2\int_0^t \|\nabla \eta_h(s)\|^2 \dd s + \frac{1}{2}\int_0^t \|\nabla \xi_h(s)\|^2 \dd s\\
    &\leq  Ch^2 \|u\|^2_{L^2(0,T;H^2(D))} + \frac{1}{2}\int_0^t \|\nabla \xi_h(s)\|^2 \dd s,
\end{align*}
where we have used the interpolation error bound $\|\nabla \eta_h\| = \|\nabla(u - \pi_h u)\| \leq C h \|u\|_{H^2(D)}$ (see, for example, inequality (19) on p.86 of  \cite{MR584442}).

For the term $\mathrm{T}_3$, using the nonnegativity of the continuous piecewise affine finite element basis functions together with the properties of $f$ and $e_k$, $k=1,\ldots,M$, we have that 
\begin{align*}
    \mathrm{T}_3& \leq M c_e^2 c_f^2 \int_0^t \|u(s) - u_h(s)\|^2_h \dd s = M c_e^2 c_f^2 \int_0^t \|\xi_h(s)\|^2_h \dd s.
\end{align*}
Here, to deduce the last equality, we made use of the fact that $\eta(x,s)=u(x,s) - (\pi_hu)(x,s) = 0$ at each vertex $x$ of the triangulation for all $s \in [0,T]$; hence, 
$(u-u_h)(x,s) = \xi_h(x,s)$ at each vertex of the triangulation, whereby $\|u(s)-u_h(s)\|_h = \|\xi_h(s)\|_h$ for all $s \in [0,T]$.

Having bounded the terms $\mathrm{T}_1$, $\mathrm{T}_2$ and  $\mathrm{T}_3$, we insert these bounds into~\eqref{eq:T1234} and take the expectation of the resulting inequality. The martingale term then vanishes, and it follows from Gr\"onwall's inequality that
\begin{align*}
   \mathbb{E}(\|\xi_h(t)\|^2_h) + 2 \int_0^t \mathbb{E}(\| \nabla \xi_h(s)\|^2) \dd s \leq 
   C h^2 \tilde{C}(u)
\end{align*}
where $\tilde{C}(u) := \mathbb{E}
   \left(\|\Delta u(s)\|_{L^2(0,T;W^{2,\infty}(D))}^2 + \|u\|^2_{L^2(0,T:H^2(D))} \right)$.
Thanks to Lemma \ref{L2-h-norm-comparison}, also
\begin{align*}
   \mathbb{E}(\|\xi_h(t)\|^2) + 2 \int_0^t \mathbb{E}(\| \nabla \xi_h(s)\|^2) \dd s \leq C h^2, 
\end{align*}
where $C=C(T,u)$ is independent of the mesh size.

\smallskip

\noindent {\bf Step 4:}
By a standard finite element interpolation error bound for piecewise affine elements on a shape-regular triangulation (cf., again, Corollary 4.4.24 on p.110 in \cite{MR2373954} in the case when $D=D_h$ is a polygonal domain in $\mathbb{R}^2$ or a Lipschitz polyhedron in $\mathbb{R}^3$, and inequality (18) on p.86 of  \cite{MR584442} in the case when $D$ has $C^2$ boundary), we have, thanks to Assumption \ref{ass:reg}, that $\|\eta_h(t)\| = \|u(t) - \pi_h u(t) \| \leq C h^2|u(t)|_{H^2(D)}$, where $C$ is a positive constant independent of $h$.  It remains to note that $u(t)-u_h(t) = \eta_h(t) + \xi_h(t)$ and apply the triangle inequality to deduce the desired bound \eqref{thm_error} on the discretization error $u(t)-u_h(t)$ for the mass-lumped finite element method.
\end{proof}

\section{Fully discrete schemes and numerical simulations}
\label{Section:NumericalExperiments}

The purpose of this final section is to demonstrate that the theoretical analysis of the continuous-in-time mass-lumped finite element method presented in the previous sections, which we believe to be of interest in itself, also opens a useful \emph{practical} perspective. 

In order to support this claim, we show that, at least in the case of $d=2$, our continuous-in-time finite element method may be complemented by a suitable time-integrator so that the resulting fully discrete scheme is both accurate and nonnegativity-preserving.  
We restrict our attention to the case $f(u) = \lambda u$ and assume that we only have a single Brownian motion, i.e., $M=1$ in~\eqref{noise_expansion}, so that we simply denote $e_1(x)$ by $e(x)$. 
Extensions of this setting may of course be considered. The extension to the case of $M\geq 2$ Brownian motions is relatively straightforward. On the other hand, considering a nonlinear right-hand side $f(u)$ requires more work. A simple idea, which is economical to implement, given the schemes we introduce below, could be an additional approximation using a linearization of $f$ over each time step, in the spirit of what was done in~\cite{brehier2023analysis}. We will not comment on this further as we do not, by any means, aim at generality in this illustrative final section.

\subsection{Fully discrete schemes}

In what follows, we construct a fully discrete scheme for \eqref{eq:contSPDE}, based on our semi-discrete scheme~\eqref{SDE_formulation}. 
Let $\Delta t$ denote the time step and let $G_n$ denote independent draws of a normalized Gaussian variable. 
Let $\tilde{M}$ be the diagonal matrix with entries $e(x_i)$ and $A$ the matrix defined in~\eqref{eq:matrixA}. We consider the scheme given by
\begin{equation}
\label{eq:fully-discrete}
U_{n+1}\, + \,\Delta t \,A\,U_{n+1} = \exp \left(\lambda G_n\, \sqrt{\Delta t}\, \tilde{M} - \frac{1}{2} \lambda^2\, \Delta t\, \tilde{M}^{2} \right) U_n,
\end{equation}
which may also be written as the two-step splitting scheme
\begin{equation}
\label{eq:fully-discrete-stepping}
  \left\{
  \begin{array}{l}
  U_{n+1/2}\,=\, \exp \left(\lambda G_n \,\sqrt{\Delta t}\, \tilde{M} - \frac{1}{2} \lambda^2\, \Delta t \, \tilde{M}^{2} \right) U_n,\\
    U_{n+1}\, + \,\Delta t \,A\,U_{n+1} \,= \,  U_{n+1/2}.
     \end{array}
\right.
\end{equation}
It is immediate to see from~\eqref{eq:fully-discrete-stepping} that, by construction, the scheme preserves nonnegativity. Because we are using a triangulation that is weakly acute, so that our mass-lumped finite element method has this property in the deterministic setting, $U_{n+1}$ is nonnegative as soon as~$U_{n+1/2}$ is.  On the other hand, because the exponential prefactor in \eqref{eq:fully-discrete-stepping}$_1$ is the exponential of a diagonal matrix, whereby it is a diagonal matrix with positive entries, $U_{n+1/2}$ and $U_{n}$ share the same sign. Combining the two steps, we obtain preservation of nonnegativity, irrespective of the choice of the time step~$\Delta t$.

Formally, the time-stepping scheme defined by \eqref{eq:fully-discrete}, or~\eqref{eq:fully-discrete-stepping}, is an exponential form of the Euler--Milstein scheme. 
Given our focus on nonnegativity-preservation, using such an exponential form is a classical idea. It is for instance very close to the integrator employed in the recent work~\cite{brehier2023analysis}, as a local-in-time linearization followed by a spatial finite difference discretization.

It is also clear, at least intuitively, that the scheme has the same order of accuracy as the Euler--Milstein scheme, and that it therefore has strong order of convergence $1$ in time. This can be rigorously proved, but we will not proceed in that direction. Our numerical experiments below will confirm empirically that this is indeed the case.

An improvement of this scheme is obtained as follows. It is classical in the numerical discretization of \emph{ordinary} (as opposed to stochastic) differential equations as well as of time-dependent  \emph{deterministic} (as again opposed to stochastic) partial differential equations, to consider the symmetric, or Strang, three-step version of a two step splitting. This is known to improve the order of discretization by one. The same improvement is observed, in terms of the \emph{weak} order of convergence, for stochastic differential equations. For the SPDE under consideration in this work, it is therefore natural, precisely with a view to improving the accuracy of the discretization in time (while still ensuring nonnegativity-preservation, a fact that is obvious from the following formulae), to change~\eqref{eq:fully-discrete-stepping} into the following three-step splitting scheme:
\begin{equation}
\label{eq:fully-discrete-stepping-2}
  \left\{
  \begin{array}{l}
  \displaystyle  U_{n+1/3}\, + \,\frac{\Delta t}{2} \,A\,U_{n+1/3} \,= \,  U_{n},\\
   U_{n+2/3}\,=\, \exp \left(\lambda G_n\, \sqrt{\Delta t} \, \tilde{M} - \frac{1}{2} \lambda^2\, \Delta t \,\tilde{M}^{2} \right) U_{n+1/3},\\
     \displaystyle  U_{n+1}\, + \,\frac{\Delta t}{2} \,A\,U_{n+1} \,= \,  U_{n+2/3}.
    \end{array}
\right.
\end{equation}
Alternatively, we could consider
\begin{equation}
\label{eq:fully-discrete-stepping-2bis}
  \left\{
  \begin{array}{l}
   \displaystyle  U_{n+1/3}\,=\, \exp \left(\lambda G^1_n\, \sqrt{\frac{\Delta t}{2}} \, \tilde{M} - \frac{1}{2} \lambda^2\, \frac{\Delta t}{2} \,\tilde{M}^{2} \right) U_{n},\\
  \displaystyle  U_{n+2/3}\, + \,{\Delta t} \,A\,U_{n+2/3} \,= \,  U_{n+1/3},\\
   \displaystyle  U_{n+1}\,=\, \exp \left(\lambda G^2_n\, \sqrt{\frac{\Delta t}{2}} \, \tilde{M} - \frac{1}{2} \lambda^2\, \frac{\Delta t}{2} \,\tilde{M}^{2} \right) U_{n+2/3},\\
     \end{array}
\right.
\end{equation}
where $G^1_n$ and $G^2_n$ denote independent draws of a normalized Gaussian variable.
The next subsection illustrates by numerical experiments the use of \eqref{eq:fully-discrete-stepping}, \eqref{eq:fully-discrete-stepping-2}, \eqref{eq:fully-discrete-stepping-2bis}, providing some comparisons of these schemes as well as with other alternative schemes that could be considered. 

Our experiments will show that \eqref{eq:fully-discrete-stepping} is the best option, along with its improvement \eqref{eq:fully-discrete-stepping-2}. It is as accurate (if not more accurate than) classical schemes and, in addition, it unconditionally preserves nonnegativity.

\subsection{Numerical experiments}

We consider the two-dimensional domain $D :=(0,1) \times (0,1)$. We use continuous piecewise 
affine polynomials on a uniform triangulation $\mathcal{T}_h$ of $\overline{D}$ with spacing $h>0$ in both coordinate directions.
The parameters chosen for our tests are:
$150$ realizations (of Brownian paths) to compute the expectation,
$u_0(x,y) := \sin(\pi\,x)\,\sin(\pi\,y)$ as initial condition and  $e(x,y) = \sin(\pi\,x)\,\sin(\pi\,y)$ as spatial noise factor.  

\paragraph*{Accuracy}

In our first set of numerical experiments, we assess the rate of convergence of the scheme~\eqref{eq:fully-discrete-stepping} with respect to both the spatial discretization paramater~$h$ and the time step~$\Delta t$. We set the endpoint of the time interval to be~$T = 1/2$ and $\lambda=3$. For the spatial discretization, we always consider our mass-lumped finite element method given by \eqref{ML-FEM_formulation}, or equivalently~\eqref{SDE_formulation}. We compare~\eqref{eq:fully-discrete-stepping} with two other time discretization methods, namely the Euler--Maruyama (EMa) and the Euler--Milstein (EMi) schemes. 
The reference solution $u_{ref}$ is computed using $\Delta t=2^{-14}$ and $h = 2^{-6}$. We shall focus our attention, in all our experiments below, on the strong order of convergence of the schemes in the probabilistic sense. We therefore always use the same noise samples for all time integrators, so as to be able to evaluate the \emph{strong} error. This suffices for our specific purposes here. The total (strong in the probability sense) error is defined by
\begin{equation}
\label{eq:definition-error}
    \sup_{0\leq n \leq K} \mathbb{E}(\| u_{h,K}(n\Delta t) - u_{ref}(n \Delta t)\|^2) +
    \int_0^T \mathbb{E} (\| \nabla u_{h,K}(s) - \nabla u_{ref}(s)\|^2) \dd s
\end{equation}
with $K=T/(\Delta t)$.  
We compute the time integral with the composite trapezium rule based on the quadrature points $n \Delta t$, $n=0,1,\ldots,K$. 

Figure \ref{fig:loglog_total_error} displays the associated error on a log-log scale. Note that~\eqref{eq:definition-error} is the \emph{square} of the strong numerical error, so all rates observed on the graphs are \emph{squared} orders of convergence. 
As expected, since the spatial discretization is based on piecewise affine finite elements, it is observed on the right of Figure \ref{fig:loglog_total_error} that all fully discrete methods considered, which we recall only differ in terms of the choice of the time-stepping scheme, share the same order of convergence in space; namely, they all exhibit first order convergence with respect to $h$. As far as convergence in time is concerned, we observe first order convergence for EMi, and order of only $1/2$ for EMa, both as predicted by theory. Our  scheme \eqref{eq:fully-discrete-stepping}  shares the first order convergence of the EMi scheme.

\begin{figure}
\centering
\begin{subfigure}{.5\textwidth}
  \centering
  \includegraphics[width=1\linewidth]{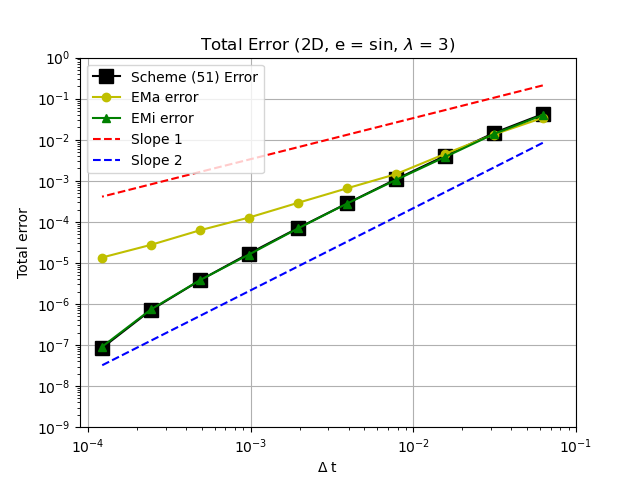}
  \caption{Fixed spatial step $h=2^{-6}$}
  \label{fig:sub1a}
\end{subfigure}%
\begin{subfigure}{.5\textwidth}
  \centering
  \includegraphics[width=1\linewidth]{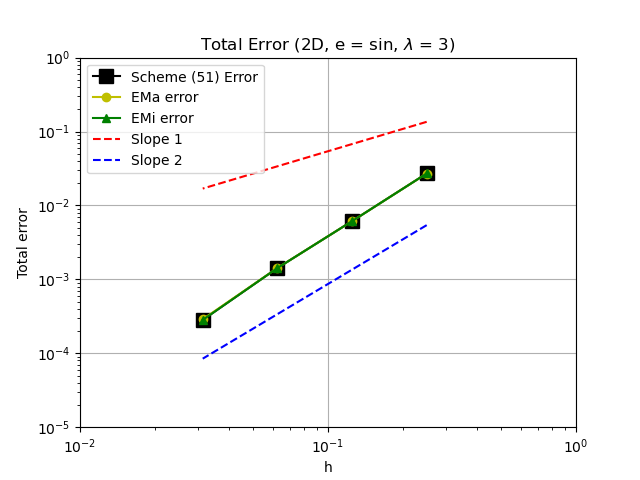}
  \caption{Fixed time step $\Delta t=2^{-14}$ }
  \label{fig:sub2a}
\end{subfigure}
\caption{Rates of convergence for the error as defined in~\eqref{eq:definition-error} (note that~\eqref{eq:definition-error} is the \emph{square} of the strong numerical error) of the two-step splitting scheme~\eqref{eq:fully-discrete-stepping}, at fixed mesh size~$h=2^{-6}$ when the time step~$\Delta t$ varies  (left), and at fixed time step~$\Delta t=2^{-14}$ when~$h$ varies  (right). Comparisons are shown with the results obtained using the Euler--Maruyama (EMa) and Euler--Milstein (EMi) schemes. Both graphs use a log-log scale. 
}
\label{fig:loglog_total_error}
\end{figure}

To complement our tests regarding rates of convergence, we also plot, in Figure \ref{fig:total_error}, the actual numerical error. We use the same parameters as in Figure \ref{fig:loglog_total_error} and again consider both cases, i.e., fixed spatial mesh size and  fixed time step. In all graphs of Figures \ref{fig:loglog_total_error} and  \ref{fig:total_error}, the errors of the schemes~\eqref{eq:fully-discrete-stepping} and EMi are visually at least (almost) indistinguishable. Put differently, the exponential form of our scheme~\eqref{eq:fully-discrete-stepping} does not modify the order of convergence of the classical EMi scheme. The difference between~\eqref{eq:fully-discrete-stepping} and all of the other schemes considered is however much more dramatic when it comes to nonnegativity-preservation.

\begin{figure}[ht]
\centering
\begin{subfigure}{.5\textwidth}
  \centering
  \includegraphics[width=1\linewidth]{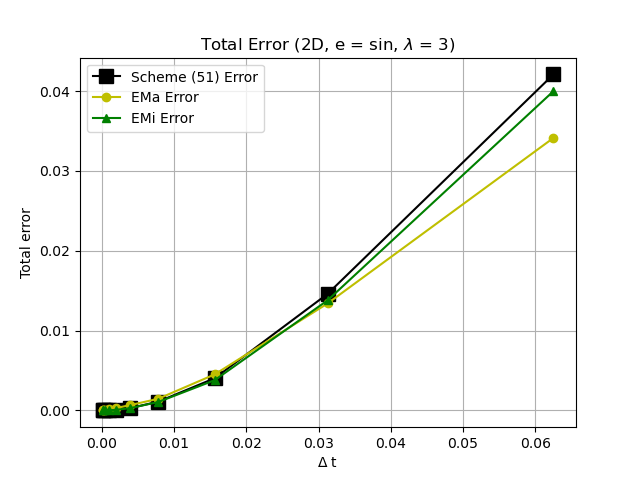}
  \caption{Fixed spatial step $h=2^{-6}$}
  \label{fig:sub1b}
\end{subfigure}%
\begin{subfigure}{.5\textwidth}
  \centering
  \includegraphics[width=1\linewidth]{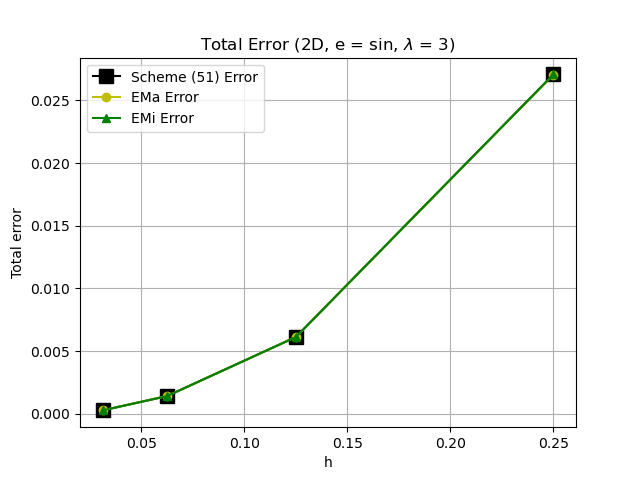}
  \caption{Fixed time step $\Delta t=2^{-14}$ }
  \label{fig:sub2b}
\end{subfigure}
\caption{Numerical errors, on a linear scale, for the same schemes and with the same parameters as those in Figure~\ref{fig:loglog_total_error} above.}
\label{fig:total_error}
\end{figure}

\begin{figure}[ht]
\centering
\begin{subfigure}{.5\textwidth}
  \centering
  \includegraphics[width=0.95\linewidth]{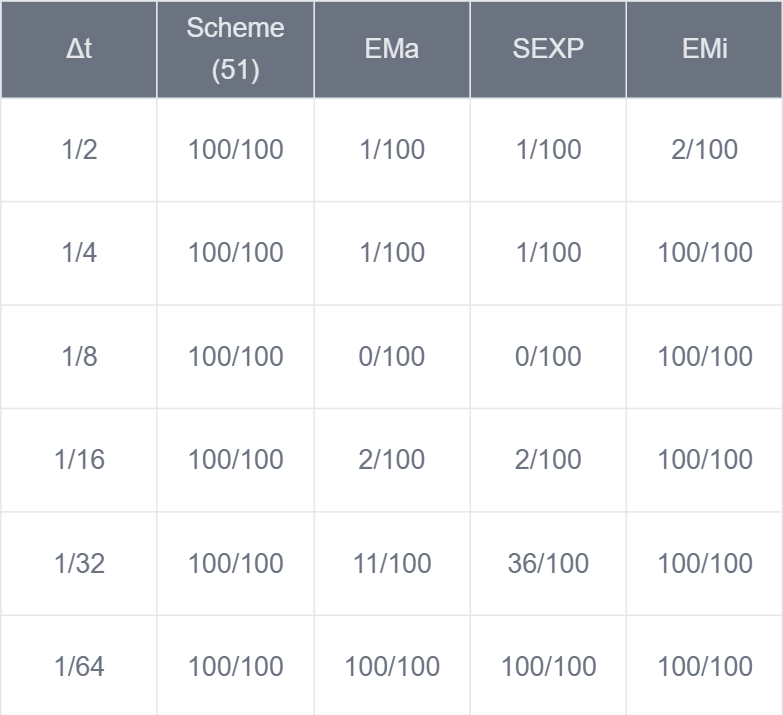}
  \caption{$\lambda =2$}
  \label{fig:sub1c}
\end{subfigure}%
\begin{subfigure}{.5\textwidth}
  \centering
  \includegraphics[width=0.95\linewidth]{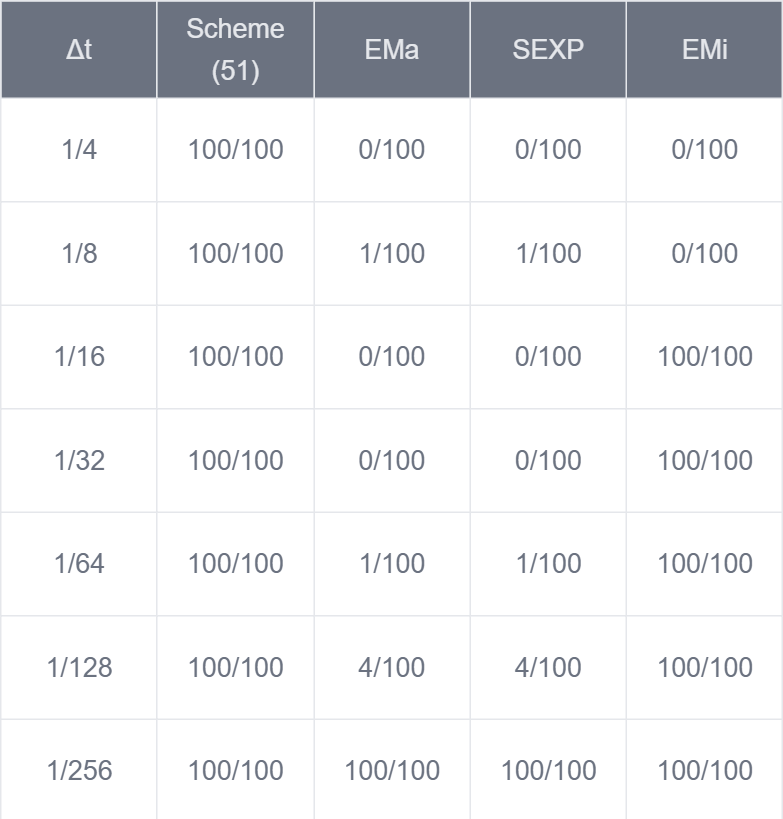}
  \caption{$\lambda=4$ }
  \label{fig:sub2c}
\end{subfigure}
\caption{Nonnegativity-preservation: comparison, for two values of the coefficient~$\lambda=2$ and $\lambda=4$ for the linear right-hand side $f(u)=\lambda\,u$, of the two-step splitting scheme~\eqref{eq:fully-discrete-stepping} with three other schemes: Euler--Maruyama (EMa), Euler Milstein (EMi) and the stochastic exponential Euler integrator scheme (SEXP) from~\cite{lord2013stochastic}. 
}
\label{fig:nonnegativity}
\end{figure}

\paragraph*{Nonnegativity-preservation}
In order to illustrate the issue of nonnegativity-preservation, we add another scheme to our portfolio, namely the \emph{stochastic exponential Euler integrator scheme} (SEXP) introduced in \cite{lord2013stochastic}. All of our experiments are performed in the same way as above, except that we consider a longer time interval with end time $T=2$, which makes the computations more demanding; we focus on our main motivational issue: preservation of nonnegativity. We fix once and for all the mesh size $h=2^{-4}$ and vary the time step.
Our numerical results are presented in Figure \ref{fig:nonnegativity}, where each entry  $k/100$ 
 is the number of solutions out of 100 simulations that remain positive over the entire time interval. The results confirm that the sequence of numerical approximations generated by \eqref{eq:fully-discrete-stepping} always remains positive and that this is not the case for the other time integrators. This is of course expected since \eqref{eq:fully-discrete-stepping} unconditionally preserves nonnegativity by construction. When we progressively reduce the time step, in order to let the competitors of  \eqref{eq:fully-discrete-stepping} achieve nonnegativity-preservation as a byproduct of increased precision, we observe that the results of these schemes improve. The results with these other schemes depend on the choice of the time step compared to the strength of the noise determined by the parameter $\lambda$.  If $ |\lambda| \sqrt{\Delta t}  \|e\|_{\infty} \leq 1$, then EMi always maintains nonnegativity. On the other hand, both EMa and SEXP generate numerical solutions which occasionally become negative, although if $|\lambda|  \sqrt{\Delta t}  \|e\|_{\infty}$ is small, then there is a much higher probability of maintaining nonnegativity.  In any event, reducing the time step is computationally expensive. The superiority of the scheme  \eqref{eq:fully-discrete-stepping} in this respect is therefore clear.

 Note that we have also compared \eqref{eq:fully-discrete-stepping} with a strategy commonly used by practitioners (and often referred to as \emph{clipping}), namely the plain Euler--Milstein method where the result is truncated when the method returns negative values. 
 For large values of lambda, the Euler--Milstein method with clipping becomes inaccurate, while \eqref{eq:fully-discrete-stepping} still performs correctly, both in terms of accuracy and, of course, non-negativity.

\paragraph*{Three-step splitting} 

Our last experiment is meant to demonstrate that \eqref{eq:fully-discrete-stepping-2} improves 
on \eqref{eq:fully-discrete-stepping} in terms of accuracy, while of course (and we do not repeat any dedicated tests) it exhibits nonnegativity-preservation by construction. We recall that this improvement is not unexpected, based on what is theoretically known in the realms of ODEs, PDEs, and SDEs. This could also be studied theoretically for our SPDEs, but we prefer to concentrate on more practical considerations here.

Figure \ref{fig:strang} (graph on the left) shows, compared to the graph on the left of Figure~\ref{fig:loglog_total_error},  that the \emph{strong} numerical error for \eqref{eq:fully-discrete-stepping-2} has essentially the same \emph{rate} of convergence as~\eqref{eq:fully-discrete-stepping}, but that the \emph{actual size} of this error is  smaller than that for  \eqref{eq:fully-discrete-stepping}. The improvement obtained for  \eqref{eq:fully-discrete-stepping-2bis} (graph on the right of Figure \ref{fig:strang}) is less evident. 
Based on the considerations developed above for ODEs, PDEs and SDEs, we presume that the \emph{weak} order of convergence will be increased by one when passing from~\eqref{eq:fully-discrete-stepping} to \eqref{eq:fully-discrete-stepping-2} and \eqref{eq:fully-discrete-stepping-2bis}. We do not conduct here this complementary set of tests regarding weak convergence. The results presented here suffice for us to be able to advocate \eqref{eq:fully-discrete-stepping-2} as the best option. 

\begin{figure}
\centering
\begin{subfigure}{.5\textwidth}
  \centering
  \includegraphics[width=1\linewidth]{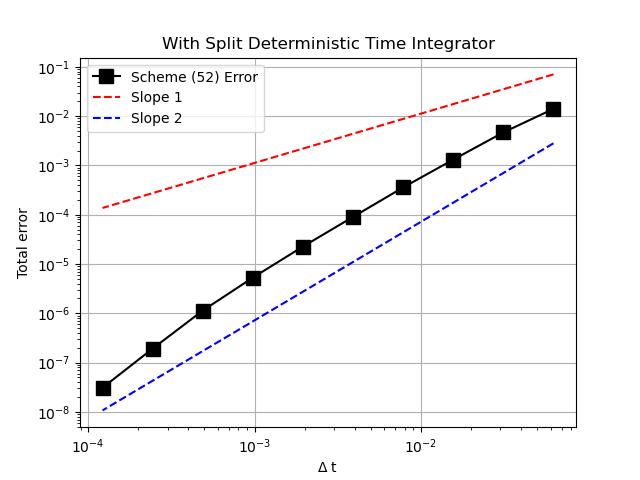}
  \caption{Strang scheme~\eqref{eq:fully-discrete-stepping-2}}
  \label{fig:sub1d}
\end{subfigure}%
\begin{subfigure}{.5\textwidth}
  \centering
  \includegraphics[width=1\linewidth]{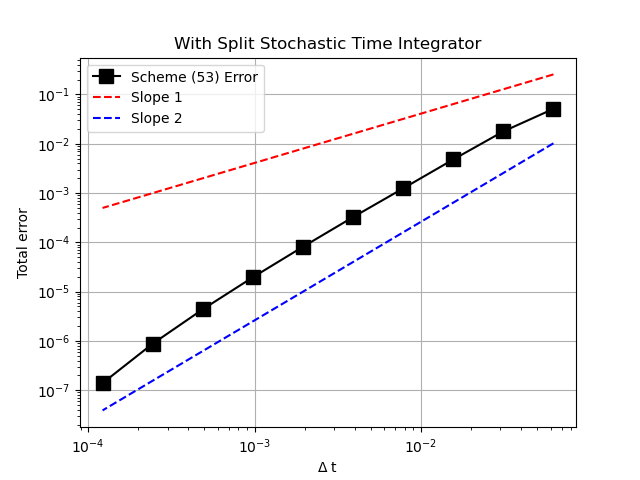}
  \caption{Strang scheme~\eqref{eq:fully-discrete-stepping-2bis}}
  \label{fig:sub2d}
\end{subfigure}
\caption{Numerical error of the Strang splitting schemes~\eqref{eq:fully-discrete-stepping-2}, on the left, and~\eqref{eq:fully-discrete-stepping-2bis}  on the right, both in Log-Log scale and at fixed~$h=2^{-6}$ when the time step~$\Delta t$ varies.}
\label{fig:strang}
\end{figure}

We close by noting that the approach to positivity-preservation proposed here does not increase the computational cost associated with solving the resulting systems of equations, as the matrix has the same sparsity structure and can be treated with the same solvers as in standard finite element or finite difference schemes. The only additional requirement is that of a weakly acute triangulation. In the planar case, a weakly acute triangulation can be generated using existing software (see, for example, \cite{MR3765870} and the publications cited therein). In three space dimensions the generation of weakly acute triangulations is more involved (see, for example, \cite{MR4900691} and in particular the discussion on pp.~107 and 108 there; note, though, that those authors use the term \textit{nonobtuse} instead of \textit{weakly acute}). Understanding how
in three space dimensions the method proposed here compares from the viewpoint of \textit{computational cost vs. accuracy} with classical techniques (such as, for example, clipping negative values of the numerical solution to zero) would be a significant computational endeavour, and is beyond
the scope of this paper. Having said this, even if  clipping negative values of the numerical solution to zero happens to be more efficient in practice, the clipping process will certainly impact on the accuracy of the resulting numerical solution and its asymptotic convergence rate to the exact solution as $h \to 0$.  In contrast, the numerical method proposed here exhibits the same \textit{optimal order} of convergence in the limit of $h \to 0$ as its deterministic counterpart for the semilinear parabolic equation $u_t - \Delta u = f(u)$ on $D \times (0,T]$ subject to the initial condition $u(x,0)=u_0(x)$ for $x \in D$ and a homogeneous Dirichlet boundary condition on $\partial D \times (0,T]$.
}

\section*{Acknowledgements}
The authors are grateful to Aur{\'e}lien Alfonsi  and Nicolas Perkowski for informative discussions and to Tony Leli{\`e}vre for his comments on a preliminary version of the manuscript. The numerical experiments in Section~\ref{Section:NumericalExperiments} were conducted by Owen Thomas Hearder (FU Berlin), whom we acknowledge for his contribution.

This research was initiated while the second author was visiting the Berlin Mathematics Research Center MATH+. Partial financial support from the Deutsche Forschungsgemeinschaft (DFG, German Research Foundation) under Germany’s Excellence Strategy---The Berlin Mathematics Research Center MATH+
(EXC-2046/1, project ID: 390685689) is gratefully acknowledged.

The first author gratefully acknowledges funding by the Daimler-Benz Foundation as part of the scholarship program for junior professors and postdoctoral researchers. Further support for the first authors was provided by the Deutsche Forschungsgemeinschaft (DFG), through the research grant CRC 1114 ``Scaling Cascades in Complex Systems'', Project Number 235221301, Project C10. The second author also acknowledges the support of a Senior Zuse fellowship, The Zuse Institute Berlin, and thanks FU Berlin for its hospitality.

\bibliographystyle{abbrv}
\bibliography{ML_FEM_SPDEs_DjLBS_arxiv_2vs}

\end{document}